\newtheorem{thm}{Theorem}
\newtheorem{lemma}{Lemma}[section]
\newtheorem{theorem}[lemma]{Theorem}
\newtheorem{corollary}[lemma]{Corollary}
\newcommand{\GL}{\mathrm{GL}}
\newcommand{\PSU}{\mathrm{PSU}}
\newcommand{\GU}{\mathrm{GU}}
\newcommand{\PSp}{\mathrm{PSp}}
\newcommand{\gen}[1]{\ensuremath{\langle #1 \rangle}}
\newcommand{\syl}[2]{\mbox{{\rm Syl}$_{#1}(#2)$}}
\renewcommand{\o}[2]{\mbox{${\mathcal O}_{#1}(#2)$}}
\renewcommand{\c}[2]{\ensuremath{C_{#1}(#2)}}
\newcommand{\f}[1]{\ensuremath{F(#1)}}
\newcommand{\z}[1]{\ensuremath{Z(#1)}}
\newcommand{\ob}[1]{\ensuremath{\overline{#1}}}
\newcommand{\normal}{\unlhd}
\newcommand{\isom}{\cong}
\newcommand{\sfit}[1]{\ensuremath{\psi(#1)}}
\newcommand{\calC}{\mathcal{C}}
\newcommand{\fh}[1]{\ensuremath{f(#1)}}
\DeclareMathOperator{\codim}{codim}
\providecommand{\MR}[1]{}
\author{Paul Flavell}
\address{U  School of Mathematics, University of Birmingham,
Birmingham B15 2TT, UK}
\email{ P.J.Flavell@bham.ac.uk}
\author{Simon Guest}
\address{Department of Mathematics, USC, Los Angeles, CA 90089-2532, USA}
\email{sguest@usc.edu}
\author{Robert Guralnick}
\address{Department of Mathematics, USC, Los Angeles, CA 90089-2532, USA}
\email{guralnic@usc.edu}
\thanks{The last two authors were partially supported by
the NSF grant DMS
0653873.}
\subjclass[2000]{20F14, 20D10}
\keywords{Solvable radical, generation by conjugates}
\date{\today}
\begin{document}
\title{Characterizations of the Solvable Radical}
\maketitle

\begin{abstract}
We prove that there exists a constant $k$ with the property: if
$\calC$\ is a conjugacy class of a finite group $G$ such that every
$k$ elements of $\calC$\ generate a solvable subgroup then $\calC$\
generates a solvable subgroup. In particular, using the
Classification of Finite Simple Groups, we show that we can take
$k=4$. We also present proofs that do not use the Classification
theorem. The most direct proof gives a value of $k=10$. By
lengthening one of our arguments slightly, we obtain a value of
$k=7$.
\end{abstract}
\section{Introduction}
The aim of this paper is to prove the following theorem:
\begin{thm} \label{thmA}
    There exists a constant $k$ with the property: if $\calC$\ is a conjugacy class of the
finite group $G$ such that every $k$ elements of $\calC$\ generate a solvable subgroup
then $\calC$\ generates a solvable subgroup.
\end{thm}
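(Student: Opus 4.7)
The plan is to argue by contradiction using a minimal counterexample. Fix a constant $k$ (to be chosen at the end) and let $G$ be a group of smallest order admitting a conjugacy class $\calC$ such that every $k$ elements of $\calC$ generate a solvable subgroup while $H := \langle \calC \rangle$ is non-solvable. Replacing $G$ by $H$, I may assume $G = \langle \calC \rangle$. If the solvable radical $R := R(G)$ is non-trivial, then in $\overline{G} := G/R$ the image $\overline{\calC}$ of $\calC$ is a conjugacy class generating the still non-solvable group $\overline{G}$, and every $k$ of its elements generate a solvable subgroup, contradicting minimality. Hence $R(G) = 1$, and the socle $N := \mathrm{soc}(G) = T_1 \times \cdots \times T_m$ is a direct product of non-abelian finite simple groups on which $G$ acts faithfully by conjugation.

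Next I would reduce to the almost simple case. The conjugation action of $G$ permutes the components $\{T_1, \ldots, T_m\}$, grouping them into $G$-orbits; on each orbit $G$ embeds into a wreath product $\mathrm{Aut}(T) \wr \mathrm{Sym}(n)$. By projecting onto a single component via $N_G(T_1) \to \mathrm{Aut}(T_1)$ and combining with the permutation action, the hypothesis on $\calC$ transfers to conjugacy classes in almost simple quotients; minimality of $G$ together with a careful tracking of $k$-tuples (a few conjugates in the wreath product produce conjugates acting trivially on the permutation factor) reduces the problem to the essential case where $G$ is almost simple, $T \leq G \leq \mathrm{Aut}(T)$ for some non-abelian finite simple $T$, and $G = \langle \calC \rangle$ is non-solvable.

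The core of the argument is the almost simple case, which I would handle via the Classification of Finite Simple Groups and two generation-by-conjugates inputs. First, for every non-abelian finite simple $T$ and every non-trivial $x \in T$, a small bounded number of $T$-conjugates of $x$ generate $T$ (results in the spirit of Guralnick--Kantor). Second, if $\calC$ is not contained in $T$ and $x \in \calC$ induces a non-trivial outer automorphism of $T$, then a bounded number of $G$-conjugates of $x$ generate a subgroup of $G$ whose intersection with $T$ is non-trivial and hence, by the first input, contains $T$. Together these yield bounded many elements of $\calC$ generating a non-solvable subgroup of $G$, contradicting the hypothesis once $k$ exceeds the relevant bound.

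The main obstacle is the outer-automorphism input: establishing it requires a case-by-case CFSG analysis --- alternating groups, sporadic groups, and simple groups of Lie type split by inner-diagonal, field, graph, and graph-field automorphisms --- together with enough control of maximal subgroups and centralisers of automorphisms to guarantee the required generation. Optimising this step is what pins down the value of $k$: exploiting CFSG fully yields the abstract's $k=4$, while replacing classification inputs by weaker uniform generation facts trades tightness for larger constants ($k = 7$ or $10$).
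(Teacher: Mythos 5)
Your overall architecture (minimal counterexample, kill the solvable radical, reduce to the socle and then to the almost simple case, finish with CFSG generation results) matches the paper's classification-based proof, but the core step rests on a false premise. You assert that ``for every non-abelian finite simple $T$ and every non-trivial $x\in T$, a small bounded number of $T$-conjugates of $x$ generate $T$.'' This is not true with a bound independent of $T$: a transvection in $\SL(n,q)$ requires at least $n$ conjugates to generate the group, and a $3$-cycle in $A_n$ requires roughly $n$ conjugates. The statement that is actually true and usable is weaker: a bounded number of conjugates generate a \emph{non-solvable} subgroup (not all of $T$), and even this is delicate. The paper gets it from Guest's solvable Baer--Suzuki theorem (Theorem~\ref{bsas}): for $x$ of odd prime order, two conjugates suffice except for a short explicit list of classes (transvections over $\mathbb{F}_3$, reflections of order $3$ in $\PSU(n,2)$, long root elements in characteristic $3$, \ldots), where three or four are needed. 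Your proposal gives no mechanism that would produce such a bound, and without one the argument does not close.

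The second, related omission is the case of involutions, which is where most of the paper's work lies. Any two conjugate involutions generate a dihedral group, which is solvable, so no two-conjugate statement can hold; the paper instead observes via Baer--Suzuki that an involution $x$ with $\langle x^G\rangle$ non-nilpotent must invert an element $y$ of odd prime order, forces $y$ into one of the exceptional classes of Table~\ref{exceptions}, and then runs a case-by-case analysis (Cases 1--8) on the structure of the relevant parabolic subgroups to derive a contradiction or exhibit four generating conjugates. Your sketch never separates involutions from odd-order elements and so misses this entirely. Finally, your reduction to the almost simple case is stated too loosely (``a careful tracking of $k$-tuples''); the paper needs a concrete input here, namely Lemma~\ref{reduction}, which uses the $\tfrac{3}{2}$-generation theorem of Guralnick--Kantor to show that an element not normalizing a component already yields a non-solvable subgroup with two or three conjugates. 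You should also be aware that the paper gives a second, classification-free proof (with $k=10$ or $7$) via Fitting heights, the invariant $\sfit{A}$, a $\dim C_V(a)\le\frac34\dim V$ bound for irreducible modules of solvable groups, and the ordinary Baer--Suzuki theorem; your proposal does not engage with that route at all.
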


The most direct proof of Theorem~\ref{thmA} uses a value of $k =
10$. The ideas involved are similar to those used in the proofs of
Hall's extended Sylow Theorems together with a little
representation theory. After a preprint containing a proof of
Theorem~\ref{thmA} was circulated, Gordeev {\em et al.}
\cite{gordeev} used the Classification of Finite Simple Groups to
prove Theorem~\ref{thmA} with a value of $k = 8$. By lengthening
one of our arguments we are able to obtain a classification free
proof with a value of $k = 7$. Using deeper representation theory,
better results are possible, see \cite{arpf} and \cite{pf}. As
conjectured by Gordeev {\em et al.} \cite{gordeev}, which has
since been announced by them independently (see \cite{GGP1, GGP2}, 
we prove that, in
fact, we can take $k=4$. Both proofs for $k=4$  rely on the Classification
theorem.  See also \cite{GPS}.  Note also that $4$ is best possible
(consider the conjugacy class of transpositions in $S_n, n > 4$).

 For many conjugacy classes, it is worth noting that
Theorem~\ref{solvablebs} below implies that it is enough to
consider pairs of elements in $\calC$.
\begin{theorem}[\cite{Guest}] \label{solvablebs}
    Let $\calC$\ be a conjugacy class of the finite group $G$ consisting of elements of prime order
$p \ge 5$.Then $\calC$\ generates a solvable subgroup if and only if every pair of elements of $\calC$\ generates a solvable subgroup.
\end{theorem}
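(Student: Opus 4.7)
The forward direction is immediate. For the reverse, I would argue by contradiction. Suppose $\calC$\ is a counterexample and $G$ has minimal order among groups carrying such a conjugacy class; replacing $G$ by $\gen{\calC}$, I may assume $G = \gen{\calC}$ is non-solvable while every two elements of $\calC$\ lie in a solvable subgroup.

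The first reduction is to the case that the solvable radical $R$ of $G$ is trivial. Passing to $\ob{G} = G/R$, the image $\ob{\calC}$ is a conjugacy class generating $\ob{G}$ whose pairs still generate solvable subgroups, since quotients of solvable groups are solvable. Because $p$ is prime, each non-trivial element of $\ob{\calC}$ has order $p$. If $\ob{\calC} = \{1\}$, then $\calC \subseteq R$ and $G \le R$ is solvable, a contradiction; otherwise $\ob{G}$ is still non-solvable (as $R$ is solvable but $G$ is not), so minimality forces $R = 1$.

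Next I would reduce to the almost simple case. With $R = 1$, the socle $N = T_1 \times \cdots \times T_r$ is a direct product of non-abelian simple groups, $\c{G}{N} = 1$, and $G$ embeds in $\mathrm{Aut}(N)$ and permutes the factors $T_i$. The orbits of any $x \in \calC$ on $\{T_1, \ldots, T_r\}$ have length $1$ or $p$, since $|x| = p$. Combining this with transitivity of $G = \gen{\calC}$ on each block of factors, together with a minimality argument applied to the projections into the individual $\mathrm{Aut}(T_i)$, one reduces to $r = 1$, so $G$ is almost simple with socle $T = T_1$.

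The heart of the argument, and its main obstacle, is the almost simple case, which I would resolve via the following classification-based fact: for every non-abelian finite simple group $T$ and every $x \in \mathrm{Aut}(T)$ of prime order $p \ge 5$, there exists $y \in x^G$ with $\gen{x,y}$ non-solvable. Applied to $x \in \calC$ this produces a pair in $\calC$ generating a non-solvable subgroup, contradicting our hypothesis and completing the proof. Establishing the cited fact amounts to a case-by-case verification over the alternating, classical, exceptional, and sporadic families, and is where the bulk of the work — and the reliance on the Classification — lies. The hypothesis $p \ge 5$ is essential: the class of transpositions in $S_n$ with $n \ge 5$ generates the non-solvable group $S_n$, yet any two transpositions lie in a dihedral group of order at most $6$, showing the statement fails for $p = 2$.
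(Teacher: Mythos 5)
First, note that the paper does not prove this statement at all: it is quoted verbatim from \cite{Guest} (Guest, \emph{A solvable version of the Baer--Suzuki theorem}), and the only ingredients of its proof that appear in the present paper are Lemma~\ref{reduction} and Theorem~\ref{bsas}, both likewise imported from \cite{Guest}. Your outline has the right overall shape --- minimal counterexample, kill the solvable radical, reduce to the almost simple case, then invoke a classification-based generation statement --- and the ``classification-based fact'' you appeal to is exactly Theorem~\ref{bsas}: since $p\ge 5$, alternative (ii) there (whose exceptions all have $x^3=1$) cannot occur, so (i) supplies a pair of conjugates generating a non-solvable group. But as a proof your proposal is essentially empty at its core: the almost simple case \emph{is} the theorem, and ``a case-by-case verification over the alternating, classical, exceptional, and sporadic families'' is the entire content of \cite{Guest}, not a step one can defer in a proof. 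Nothing in your write-up engages with why such a pair exists for, say, a semisimple element of order $5$ in a classical group, which is where all the difficulty lives.

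There is also a concrete gap in your reduction to the almost simple case. Writing the socle as $N=T_1\times\cdots\times T_r$, your ``minimality argument applied to the projections into the individual $\mathrm{Aut}(T_i)$'' only makes sense when each $T_i$ is normal in $G$: if elements of $\calC$ permute the components non-trivially (your own observation that orbits have length $p$ shows this can happen), then $\c{G}{T_i}$ is not normal in $G$ and there is no quotient $G/\c{G}{T_i}$ to which minimality can be applied; nor can one simply pass to $N_{G}(T_i)$, since $\calC\cap N_G(T_i)$ need not be a conjugacy class generating that subgroup. This is precisely the case handled by Lemma~\ref{reduction}(a): for $x$ of odd order with $x\notin N_G(L)$ one automatically has $x^2\notin\c{G}{L}$, and the $\tfrac{3}{2}$-generation theorem of Guralnick and Kantor \cite{GK} (itself classification-dependent) produces a conjugate $x^g$ with $\gen{x,x^g}$ non-solvable. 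A naive choice of $g$ does not work here --- one can arrange for $\gen{x,x^g}\cap N$ to be abelian even when $x$ cycles $p$ components --- so this step genuinely needs \cite{GK} and cannot be replaced by minimality alone. Finally, your closing remark tests only $p=2$; the sharper point is that $p=3$ also fails (transvections in $\PSL(n,3)$, etc., as in Table~\ref{exceptions}), which is why the hypothesis is $p\ge 5$ rather than $p$ odd.
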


The corresponding result for nilpotency is true without any restriction on $p$.
This is the Baer-Suzuki theorem and is well known and reasonably
elementary.

These results do not hold for all groups, but the finite case does yield the
following results.

\begin{corollary}  \label{linear}  Let $k$ be a field and $G$ a subgroup
 of $\GL(n,k)$.
\begin{enumerate}
\item If $g \in G$, then the normal closure of $g$ in $G$ is solvable
if and only if every $4$ conjugates of $g$ generate a solvable subgroup.
\item If $k$ has characteristic $0$ or $p > 3$ and $g \in G$ is a unipotent
element, then the normal closure of $g$ in $G$ is solvable
if and only if every $2$ conjugates of $g$ generate a solvable subgroup.
\end{enumerate}
\end{corollary}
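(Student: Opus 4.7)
Both directions of each equivalence have a trivial ``only if'' part since every subgroup of a solvable group is solvable, so I focus on the converses. The plan for each part is the same: assume the hypothesis on few conjugates, suppose for contradiction that $N := \langle g^G \rangle$ is not solvable, reduce to a finite linear quotient, and apply the relevant preceding theorem there.

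For the reduction I would combine two classical facts. Zassenhaus' theorem gives a bound $d = d(n)$ on the derived length of any solvable subgroup of $\GL(n,k)$, so $N^{(d+1)} \neq 1$ and we may pick $1 \neq x \in N^{(d+1)}$. Since $x$ is an iterated commutator in finitely many conjugates $g^{x_1}, \ldots, g^{x_m}$, the finitely generated subgroup $G_0 := \langle g, x_1, \ldots, x_m\rangle \leq G$ has $N_0 := \langle g^{G_0}\rangle$ containing $x \in N_0^{(d+1)}$, hence non-solvable. Mal'cev's residual-finiteness theorem for finitely generated linear groups, via reduction modulo a maximal ideal of the finitely generated $\mathbb{Z}$-subalgebra $R \subseteq k$ generated by matrix entries and inverse determinants, then yields a homomorphism $\varphi \colon G_0 \to \bar G_0 \leq \GL(n, \mathbb{F}_q)$ with finite image and $\varphi(x) \neq 1$; applying Zassenhaus again inside $\GL(n,\mathbb{F}_q)$ forces $\bar N_0 := \varphi(N_0) = \langle \bar g^{\bar G_0}\rangle$ to be non-solvable, where $\bar g := \varphi(g)$.

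For part (1), I would apply Theorem~\ref{thmA} with $k = 4$ to the finite group $\bar G_0$ and the conjugacy class $\bar g^{\bar G_0}$. Any four elements of this class lift to four $G$-conjugates of $g$, which generate a solvable subgroup of $G$ by hypothesis, so their images generate a solvable subgroup of $\bar G_0$. Theorem~\ref{thmA} then forces $\bar N_0$ to be solvable, contradicting the above.

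For part (2), I would apply Theorem~\ref{solvablebs}, which requires a class of elements of prime order $p \geq 5$. In characteristic $0$ the Mal'cev reduction can be chosen with residue characteristic $p > \max(3, n)$; then $(\bar g - 1)^n = 0$ together with Frobenius in characteristic $p$ give $\bar g^p = 1$ and $\bar g \neq 1$, so $\bar g$ has order exactly $p$ and Theorem~\ref{solvablebs} applies directly to $\bar g^{\bar G_0}$ to make $\bar N_0$ solvable, a contradiction. In characteristic $p > 3$, $\bar g$ is unipotent in $\GL(n,\mathbb{F}_q)$ of order $p^s$ for some $s \geq 1$, and I would induct on $s$: the case $s = 1$ is again direct, while for $s > 1$ any two $\bar G_0$-conjugates of $\bar g^p$ lie in the subgroup generated by the corresponding two conjugates of $\bar g$ (solvable by hypothesis), so the inductive hypothesis applied to $\bar g^p$ makes $M := \langle (\bar g^p)^{\bar G_0}\rangle$ solvable; in $\bar G_0/M$ the image of $\bar g$ has order dividing $p$, so Theorem~\ref{solvablebs} yields $\bar N_0/M$ solvable, and hence $\bar N_0$ solvable, again a contradiction. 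The main technical obstacle throughout is ensuring that non-solvability survives passage to a finite quotient, which is precisely what the combination of Zassenhaus' derived-length bound with Mal'cev's residual finiteness delivers.
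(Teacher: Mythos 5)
Your proposal is correct and follows essentially the same route as the paper's proof: bound the derived length of solvable subgroups of $\GL(n,k)$, pass to a finitely generated subgroup and then to a finite congruence quotient in which the normal closure of $g$ remains non-solvable, and apply Theorem~\ref{thmA} (resp.\ Theorem~\ref{solvablebs}) to that finite image. The only real difference is that you handle the positive-characteristic case where the unipotent image has order $p^{s}$ with $s>1$ by an explicit induction on $s$, a point the paper's proof of part (2) passes over in a single clause.
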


\section{Proof of Theorem A using the Classification theorem}

Let $(\calC,G)$ be a minimal counterexample. Then every four
elements of $\calC$\ generates a solvable subgroup, yet the subgroup
generated by $\calC$\ is not solvable. Since $|G|$ is minimal, it is
clear that the solvable radical of $G$ is trivial.  The following
lemma \cite[Lemma]{Guest} will be used to show that $G$ must be almost simple.

\begin{lemma} \label{reduction}
Suppose that $G$ is a finite group such that the Fitting subgroup
$\f{G}$ is trivial. Let $L$ be a component of $G$. \\
{\rm (a)} If $x$ is an element of $G$ such that $x \not\in N_G(L)$
and $x^2 \not\in C_G(L)$ then there exists an element $g$ in $G$
such
that $\left\langle x,x^g \right\rangle$ is not solvable. \\
{\rm (b)} If  $x$ is an element of $G$ such that $x \not\in
N_G(L)$ and $x^2 \in C_G(L)$ then there exist elements $g_1$ and
$g_2$ in $G$ such that $\left\langle x,x^{g_1}, x^{g_2}
\right\rangle$ is not solvable.
\end{lemma}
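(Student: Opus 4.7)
The plan is to exploit the assumption $\f{G} = 1$: this forces $F^*(G) = E(G)$, so $E(G)$ is the central product of the components of $G$, and distinct components commute modulo their intersection in $\z{E(G)}$. In particular, because $x \notin \n{G}{L}$, the component $L^x$ is distinct from $L$ and the two commute in this central product. This is the structural backbone of both parts of the proof.

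For part (a), I would split on whether $x^2 \in \n{G}{L}$. If yes, the $\gen{x}$-orbit of $L$ has length $2$, and the hypothesis $x^2 \notin \c{G}{L}$ says $x^2$ induces a non-trivial automorphism $\alpha$ of $L$. For $g \in L$, the identity $(x^g)^2 = (x^2)^g$ places $\gen{x^2, (x^2)^g}$ inside $\gen{x, x^g}$, and the commutator $[x^2, g] = \alpha(g)^{-1} g$ lies in $L \cap \gen{x, x^g}$. I would select $g$ so that $\alpha(g)^{-1}g$ together with $x^2$ (acting on $L$ as $\alpha$) generates a non-solvable subgroup of $L\gen{x^2}$: since $L$ is non-abelian simple and $\alpha \neq 1$, such $g$ exists by a $2$-generation argument (one may arrange $\gen{g, \alpha(g)} = L$). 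If instead $x^2 \notin \n{G}{L}$, the orbit has length at least $3$, and I would either push the argument through the permutation action on components (when the orbit is large enough that two $k$-cycles, after suitable conjugation, generate a non-solvable group) or analyse the element $w := x(x^g)^{-1} = (g^{-1})^{x^{-1}} g \in L^{x^{-1}} L$ together with its $x$-conjugates, which sit in pairs of consecutive components and can be made to generate a diagonal-type non-solvable subgroup of $E(G)$ by a careful choice of $g$.

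For part (b), $x^2 \in \c{G}{L}$ forces the orbit to have length $2$ and $x^2$ to be trivial on $L$. The orbit-length-$2$ trick from (a) now collapses, because with a single conjugate $x^g$, $g \in L$, the only $L$-projections one can extract from $\gen{x, x^g}$ form a cyclic (hence solvable) subgroup. The plan is therefore to use two conjugates. For $g_1, g_2 \in L$, the element $(x^{g_1})^{-1} x^{g_2} \in \gen{x, x^{g_1}, x^{g_2}}$ lies in the central product $L \cdot L^x$, and its $L$-component varies essentially independently of its $L^x$-component as $g_1, g_2$ range over $L$. One exploits this freedom to pick $g_1, g_2$ so that the $L$-projections of two such elements together generate a non-solvable subgroup of $L$.

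The main obstacle, common to both parts, is verifying that the constructed elements actually project onto a non-solvable subgroup of $L$ (or of $L\gen{\alpha}$). This reduces to elementary $2$-generation statements for non-abelian simple groups, possibly twisted by an outer automorphism---results that, importantly for the classification-free spirit of the paper, can be established without the Classification of Finite Simple Groups. The most delicate situations are orbits of length $3$ or $4$ in part (a), where the permutation action on components is itself solvable and one must rely on the diagonal-subgroup analysis inside $E(G)$.
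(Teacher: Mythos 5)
First, a point of orientation: the paper does not actually prove Lemma~\ref{reduction} --- it is imported from \cite{Guest}, and the text only records the essential inputs, namely the Guralnick--Kantor $\tfrac{3}{2}$-generation theorem for part (a) and the $2$-generation of every finite simple group for part (b). Your structural setup agrees with the intended one: since $\f{G}=1$ we get $Z(E(G))\le \f{G}=1$, so $E(G)$ is an honest direct product of the components, $L^{x}\ne L$ commutes with $L$, and everything is read off from projections of elements of $\gen{x,x^{g}}\cap E(G)$ onto $L$. Your part (b) is essentially right: the two elements $x^{-1}x^{g_1}$ and $x^{-1}x^{g_2}$ lie in $L\times L^{x}$ and project onto $g_1,g_2$, so $2$-generation of $L$ finishes (your single element $(x^{g_1})^{-1}x^{g_2}$ does \emph{not} have independent components --- its $L$- and $L^{x}$-parts are $g_1^{-1}g_2$ and $(g_2^{-1}g_1)^{x}$ --- but this is easily repaired). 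However, your recurring claim that the needed generation facts can be established without the Classification is wrong and runs against the paper's own framing: both the Guralnick--Kantor theorem and the statement that every finite simple group is $2$-generated are CFSG-dependent, and the lemma is used only in the CFSG-based Section 2, so there is no classification-free spirit to preserve here.

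The genuine mathematical gap is in part (a). When the $\gen{x}$-orbit of $L$ has length $k\ge 3$ you only consider conjugators $g\in L$ (your $w=x(x^{g})^{-1}\in L^{x^{-1}}L$ presupposes this), and for such $g$ no ``careful choice'' can succeed: setting $N=\gen{w^{x^{i}}\,:\,i}$, one checks that $N\le\prod_{i}\gen{g^{x^{i}}}$ whenever $x^{k}$ centralizes $L$ (perfectly possible in case (a), since for $k\ge 3$ the hypothesis $x^{2}\notin \c{G}{L}$ is automatic), so $N$ is abelian and $\gen{x,x^{g}}=N\gen{x}$ is metabelian; the desired $g$ simply does not exist inside $L$. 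The fix is to take $g=g_0g_1$ with $g_0\in L$, $g_1\in L^{x}$: then the $L$-projections of $w$, $w^{x}$, $w^{x^{-1}}$ are $(g_1^{x^{-1}})^{-1}g_0$, $g_0^{-1}$ and $g_1^{x^{-1}}$, two independently chosen elements of $L$, and $2$-generation finishes. In the $k=2$ case your reduction to ``find $g$ with $\gen{g,\alpha(g)}=L$'' is a plausible but unproved assertion that is not the cited $\tfrac{3}{2}$-generation theorem, and your intermediate step --- that $\gen{\alpha(g)^{-1}g,\,x^{2}}$ is then nonsolvable --- does not follow from it, since that subgroup only certifies the ``difference'' subgroup $\gen{\alpha^{i+1}(g)^{-1}\alpha^{i}(g)\,:\,i}$ of $L$, not $\gen{g,\alpha(g)}$ itself. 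The standard route again takes $g=g_0g_1\in LL^{x}$, which places an arbitrary $u\in L$ together with the nontrivial element $s=h^{-1}\alpha(h)$ (for suitable $h$, using $\alpha\ne 1$) into the $L$-projection, and then applies Guralnick--Kantor verbatim to $s$. So your skeleton is right, but the two places where the real work happens --- choosing the conjugators across two components rather than in one, and invoking the correct CFSG-dependent generation theorems --- are missing or misattributed.
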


Part (a) of Lemma~\ref{reduction} relies on the so called
$\tfrac{3}{2}$-generation result of Guralnick and Kantor
\cite{GK}. Part $(b)$ of Lemma~\ref{reduction} only relies on the
fact that every finite simple group can be generated by two elements
(see \cite{GurAs}).   We can now show that $G$
is almost simple:

\begin{lemma}
 $G$ is almost simple.
\end{lemma}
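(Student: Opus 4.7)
The plan is to combine Lemma~\ref{reduction} with the minimality of $(\calC,G)$, repeatedly reducing to quotients or subquotients to force the structure of $G$. Since the solvable radical of $G$ is trivial, the Fitting subgroup $\f{G}$ is also trivial, so $F^*(G)=E(G)=L_1 \times \cdots \times L_r$ is a direct product of non-abelian simple groups (with each $\z{L_i} \leq \z{E(G)} \leq \f{G}=1$), and $\c{G}{E(G)}=1$. If some $x \in \calC$ failed to normalize a component $L$, then Lemma~\ref{reduction} would produce either two or three conjugates of $x$ generating a non-solvable subgroup, contradicting the hypothesis that every four elements of $\calC$ generate a solvable subgroup. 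Hence every $x \in \calC$ normalizes every component.

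Set $H=\langle \calC \rangle$, a non-solvable normal subgroup of $G$; I will show $E(G) \leq H$. Let $N$ be any minimal normal subgroup of $G$; since $\f{G}=1$, $N$ is a product of components (a $G$-orbit of components). If $N \not\leq H$, then $N \cap H \unlhd G$ is properly contained in $N$, hence trivial by minimality of $N$; then $HN/N \cong H$ is a non-solvable subgroup of $G/N$ generated by the conjugacy class $\calC N/N$, whose four-element subsets still generate solvable subgroups (images of solvable groups). Thus $(\calC N/N,G/N)$ is a strictly smaller counterexample, contradicting minimality. Therefore every minimal normal subgroup of $G$ is contained in $H$, and since $E(G)$ is the product of these, $E(G) \leq H$.

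Now suppose, for contradiction, that $r \geq 2$. If every $L_i$ is normal in $G$, then $L_1 \unlhd G$ and $H/L_1 \supseteq E(G)/L_1 \cong L_2 \times \cdots \times L_r$ is non-solvable, so $(\calC L_1/L_1,G/L_1)$ is a smaller counterexample. Otherwise some $L_i$ is not normal in $G$, so $\n{G}{L_i}<G$. Set $\tilde G := \n{G}{L_i}/\c{G}{L_i}$, an almost simple group with $L_i \hookrightarrow \tilde G \leq \mathrm{Aut}(L_i)$ and $|\tilde G|<|G|$, and let $\phi:\n{G}{L_i}\to \tilde G$ be the quotient map. Since $\calC \subseteq \n{G}{L_i}$, decompose $\phi(\calC) = D_1 \sqcup \cdots \sqcup D_s$ as a union of $\tilde G$-conjugacy classes. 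Each $\langle D_j \rangle$ is normal in $\tilde G$, so $\phi(H)=\langle \phi(\calC) \rangle = \prod_j \langle D_j \rangle$ contains $\phi(L_i) \cong L_i$ and is therefore non-solvable. Since a product of normal solvable subgroups is solvable, some $\langle D_j \rangle$ must be non-solvable. Any four elements of $D_j$ lift to four elements of $\calC$ generating a solvable subgroup, whose $\phi$-image is solvable; hence $(D_j,\tilde G)$ is a smaller counterexample, contradicting minimality. Therefore $r=1$, and since $\c{G}{L_1} \leq \z{L_1}=1$, $G$ embeds in $\mathrm{Aut}(L_1)$ with $L_1 \unlhd G$, so $G$ is almost simple.

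The main obstacle is the final subcase, namely $r \geq 2$ with $G$ permuting the components nontrivially. The key insight is that passing to the almost simple quotient $\tilde G=\n{G}{L_i}/\c{G}{L_i}$ strictly reduces the group order, and although $\phi(\calC)$ may split into several $\tilde G$-classes, the normal subgroups they generate have product equal to $\phi(H)$, which is non-solvable; since a product of normal solvable subgroups is solvable, at least one single class $D_j$ must generate a non-solvable subgroup, furnishing the required smaller counterexample.
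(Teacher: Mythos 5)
Your proof is correct, but it takes a genuinely different and longer route than the paper's. Both arguments ultimately rest on Lemma~\ref{reduction} (to force elements of $\calC$ to normalize components) together with minimality, but the paper exploits minimality more aggressively at the outset: taking $N$ minimal normal and $x\in\calC$, it notes that $[x,N]\neq 1$ (else $\langle x^G\rangle$ embeds in $G/N$ and is solvable), that the normal closure of $x$ in $\langle x,N\rangle$ contains the non-solvable subgroup $[x,N]$, and hence by minimality that $G=\langle x,N\rangle$. This forces $\langle x\rangle$ to be transitive on the simple factors of $N$, so Lemma~\ref{reduction} immediately yields that $N$ is simple, and almost simplicity follows since $C_{\langle x\rangle}(N)$ is central. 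You instead first show every element of $\calC$ normalizes every component and that $E(G)\leq\langle\calC\rangle$, and must then separately eliminate the configuration in which $G$ (but not $\langle\calC\rangle$) permutes several components; your device of passing to the almost simple quotient $N_G(L_i)/C_G(L_i)$, splitting the image of $\calC$ into $\tilde G$-classes, and using that a product of normal solvable subgroups is solvable to extract one non-solvable class, is a correct but more elaborate way of handling a case that the paper's reduction to $G=\langle x,N\rangle$ sidesteps entirely. Both arguments are valid; the paper's is the more economical, while yours has the mild virtue of not replacing $G$ by a subgroup and of making the role of the full component structure explicit.
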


\begin{proof}  Let $x \in \calC$ such that every four
conjugates of $x$ generate a solvable subgroup of $G$
but that $M:=\langle x^G \rangle$ is not solvable.

Let $N$ be a minimal normal subgroup of $G$.
Since $G$ has no solvable normal subgroups,
$N = L \times \ldots \times  L$ with $L$ a nonabelian
simple group.

By minimality, $MN/N$ is solvable.  If $[x,N]=1$,
then $[M,N]=1$ and so $M$ embeds in $G/N$, whence
$M$ is solvable.

Set $H=\langle x, N \rangle$.  The normal closure
of $x$ in $H$ contains $[x,N]$ which is a nontrivial
normal subgroup of $N$, whence is not solvable.
Thus, by minimality, $G=H$, whence $x$ acts transitively
on the direct factors of $N$.  By Lemma \ref{reduction},
this implies that $N=L$ is simple.  Since $C_{\langle x \rangle}(N)$
is central in $G$, this is trivial, whence
$N$ is the unique minimal normal subgroup of $G$.
Thus, $G$ is almost simple.
\end{proof}

Let $G_0$ be the socle of the almost simple group $G$. The
Classification of Finite Simple Groups imples that $G_0$ is an
alternating group, a simple group of Lie type, or a sporadic
group. Since the solvable radical of $G$ is trivial and
$(\calC,G)$ is a counterexample to the theorem, every four
elements of $\calC$ generate a solvable group. Observe that it
suffices to assume that the elements of $\calC$\ have prime order.
Indeed, the following theorem implies that we may assume that
$\calC$\ is a conjugacy class of involutions. It also precludes the
vast majority of possibilities for $G_0$. \par
\begin{theorem}[\cite{Guest}] \label{bsas}
Let G be a finite almost simple group with socle $G_0$. Suppose
that $x$ is an element of odd prime order in $G$. Then one
of the following holds. \\
  {\rm (i)} There exists $g \in G$ such that $\langle x , x^g \rangle $ is
  not solvable. \\
  {\rm (ii)} $x^3=1$ and $(x,G_0)$ belongs to a short list of exceptions given
  in Table \ref{exceptions}. Moreover, there exist $g_1, g_2 \in G$
  such that $\langle x , x^{g_1}, x^{g_2} \rangle$ is not solvable,
  unless $G_0 \cong PSU(n,2)$ or $PSp(2n,3)$. In any case, there exist
  $g_1,g_2,g_3 \in G$ such that
  $\langle x , x^{g_1}, x^{g_2}, x^{g_3} \rangle$ is not solvable.\\
\end{theorem}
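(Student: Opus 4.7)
The plan is to reduce first to the case $p=3$, then work case-by-case through the Classification. For any odd prime $p \geq 5$, I would apply Theorem~\ref{solvablebs}: if conclusion (i) fails, every pair of conjugates of $x$ generates a solvable subgroup, so by that theorem the whole class does, forcing $N := \langle x^G \rangle$ to be a solvable normal subgroup of $G$. Since $G$ is almost simple with nonabelian simple socle $G_0$, either $N \cap G_0 = G_0$ (contradicting solvability of $N$) or $N \cap G_0 = 1$, in which case $x$ centralizes $G_0$ and is therefore trivial. So conclusion (i) always holds when $p \geq 5$.

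This leaves $p = 3$, the main content of the theorem. I would treat the families of $G_0$ separately. For $G_0 = A_n$, an element $x$ of order $3$ is a product of disjoint $3$-cycles, and one exhibits a conjugate $x^g$ so that $\langle x, x^g \rangle$ is transitive on a set of at least $5$ points and contains a $3$-cycle; Jordan's classical theorem then forces this subgroup to contain $A_m$ for some $m \geq 5$, which is nonsolvable. For $G_0$ sporadic, I would inspect each class of elements of order $3$ via the ATLAS, locating for each $x$ a conjugate $x^g$ whose product $xx^g$ has order landing inside a nonsolvable maximal subgroup. For $G_0$ of Lie type, the strategy is to use Zsigmondy primes together with Borel--Tits and the structure of subsystem subgroups to find $x^g$ such that $\langle x, x^g \rangle$ lies in (and surjects onto the Levi of) a rank-$2$ parabolic or a long-root $\SL(2,q)$ pair, which is nonsolvable whenever the underlying field is not too small.

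The main obstacle will be the Lie type groups in small defining characteristic $2$ or $3$, which produce the exceptional list in Table~\ref{exceptions}, and in particular the families $\PSU(n,2)$ and $\PSp(2n,3)$ singled out in the statement. Here elements of order $3$ have unusually large centralizers and many pairs fuse into solvable subgroups; one must study the action of $x$ on the natural module and produce an explicit triple (or quadruple) of conjugates whose invariant subspace configuration cannot be simultaneously preserved by any solvable overgroup. Typically one forces $\langle x, x^{g_1}, x^{g_2} \rangle$ to contain an irreducible subgroup of type $\SL(2,q)$, $\mathrm{SU}(3,q)$ or $\Sp(4,q)$ on a suitable subspace, which is already nonsolvable.

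Finally, the uniform bound of four conjugates at the end of (ii) proceeds by an inductive argument on Lie rank for classical $G_0$ (peeling off a hyperbolic plane or a unitary line on which $x$ acts nontrivially, then recursing), together with a short finite check for exceptional $G_0$ and the small alternating and sporadic cases, and part (b) of Lemma~\ref{reduction} to absorb any element not normalizing a chosen component. The conceptual skeleton is clean---Theorem~\ref{solvablebs} for $p \geq 5$ and module-theoretic case analysis for $p = 3$---but the technical labour, and the step most likely to hide subtlety, is the exhaustive verification that Table~\ref{exceptions} is complete and that each entry really does force exactly the stated number of conjugates.
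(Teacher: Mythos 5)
This statement is not proved in the paper at all: it is imported verbatim from Guest's paper \cite{Guest} (``A solvable version of the Baer--Suzuki theorem'') and used as a black box in the classification-based proof of Theorem~A. So there is no in-paper argument to compare yours against, and your proposal has to be judged on its own terms.

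On those terms, two points. First, your reduction of the case $p\ge 5$ to Theorem~\ref{solvablebs} is logically valid \emph{given} that theorem (if (i) fails then every pair of conjugates generates a solvable group, so $\langle x^G\rangle$ is a nontrivial solvable normal subgroup of an almost simple group, which is impossible since such a subgroup must contain $G_0$). But be aware that this is almost certainly circular with respect to the actual source: in \cite{Guest} the almost simple dichotomy you are trying to prove \emph{is} the technical heart of the proof of Theorem~\ref{solvablebs}, which is first reduced to almost simple groups by an analogue of Lemma~\ref{reduction}. So you have not given an independent route to the $p\ge 5$ case; you have quoted its conclusion in a repackaged form. Second, and more seriously, for $p=3$ everything that makes the theorem hard is deferred. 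The assertions that Jordan's theorem, ATLAS inspection, Zsigmondy primes and root-$\SL_2$ arguments ``locate'' the required conjugates are strategy descriptions, not proofs; and the genuinely delicate content --- that Table~\ref{exceptions} is the \emph{complete} list of exceptions, that three conjugates suffice except for $\PSU(n,2)$ and $\PSp(2n,3)$, and that four always suffice --- is exactly the part you acknowledge but do not carry out. As written, the proposal is a plausible outline of how such a proof goes (and it does resemble the kind of case analysis Guest performs), but it does not constitute a proof of the statement.
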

\begin{table}
\begin{center}
  \begin{tabular}{|c|c|}
    \hline
    $G_0$ & $x$ \\
    \hline
    $PSL(n,3)$ & transvection  \\
    $PSp(2n,3)$ & transvection \\
    $PSU(n,3)$ & transvection \\
    $PSU(n,2)$ & reflection of order $3$\\
    $P \Omega^{\epsilon}(n,3)$ & $x$ a long root element \\
    $E_l(3),F_4(3), {^2}E_6(3), {^3}D_4(3)$ & $x$ a long root element \\
    $G_2(3)$ & $x$ a long or short root element \\
    $G_2(2)^{\prime} \cong PSU(3,3)$ & transvection \\
    \hline
  \end{tabular}
\vspace{5mm} \caption{List of exceptions to Theorem~\ref{bsas}}
\label{exceptions}
\end{center}
\end{table}

 Now observe that if $\langle x,x^g\rangle$ is a
$2$-group for all $g \in G$ then $\left\langle x^G\right\rangle$
is nilpotent by the Baer--Suzuki Theorem. So if $x$ is an
involution in an almost simple group there must exist a conjugate
$x^{g_1}$ such that $\langle x,x^{g_1}\rangle$ is not a $2$-group.
Thus, $x$ inverts an element $y$ of odd prime order.
So $(\calC, G)$ cannot be a minimal counterexample
unless:
\begin{enumerate}
 \item $G_0$ is one of the group in Table \ref{exceptions};
\item  $\calC$ is a conjugacy class of involutions; and
\item if $x \in \calC$, then
$x$ inverts no elements of odd prime order other than those in the listed conjugacy classes
of Table \ref{exceptions}.
\end{enumerate}

    We shall
rule out these possibilities case by case. \\

\noindent Case 1.  $G_0 = A_n(3), n > 1$.\\

Then $x$ inverts a transvection $y$.   This implies that $x$
normalizes the parabolic subgroup $P:=C(y)$.  If $n > 2$, this
implies that $x$ is inner-diagonal (for a graph automorphism
does not preserve the $G_0$ class of $P$).  If $x$ is inner diagonal,
then we may view it in $\GL(n+1,3)$, and since it has a nontrivial
eigenvalue over the field of three elements (it preserves a hyperplane),
we see that we may reduce to the case $n=2$, where it is clear that
$x$ inverts a regular unipotent element of order $3$.

So it remains to consider $x$ a graph automorphism with $n=2$.
There is a unique such class of involutions and we see that it inverts
an element of order $13$. \\

\noindent Case 2.  $G_0 = C_n(3), n > 1$.   \\

Again, $x$ inverts a transvection $y$, whence $x$ must be an
outer involution.  Then $x$ acts on $Q$, the unipotent radical
of $C(y)$.  Note that $Q$ is extraspecial.  Thus, $x$ cannot
centralize $Q/Z(Q)$ (since it does not centralizes $y \in Z(Q)$).
So $x$ must invert some element of $Q \setminus{Z(Q)}$.  However
all transvections in $Q$ are central, a contradiction.  \\

\noindent Case 3.  $G_0$ an orthogonal group over the field of
$3$ elements (of dimension at least $7$). \\

Any such involution $x$ can be viewed as acting on the natural orthogonal
module.  It is straightforward (since $x$ acts quadractically on
the module) to see that $x$ leaves invariant a nondegnerate subspace
of dimension $d=5$ or $6$ on which $x$ acts noncentrally.  The result
follows by induction. \\

\noindent Case 4.  $G_0 = \PSU(n,3), n > 2$.  \\

Any such involution $x$ can be viewed as acting (possibly semilinearly)
on the natural module with $x$ inverting a transvection $y$.
Let $Y$ be the group generated by $y$.  Then the normalizer of $Y$
is a parabolic subgroup $P$ with an extraspecial unipotent radical
$Q$.  Since $Y = Z(Q)$, it follows that $x$ cannot act as a scalar
on $Q/Z(Q)$.  Thus, $x$ is not in the solvable radical of $N_G(P)$
unless $n =3$ or $4$ (in which case $P$ is solvable).

If $n=3$ or $4$, it follows by \cite{GS} (or a straightforward
computation) that $4$ conjugates of $x$ generate a subgroup
containing $G_0$. \\

\noindent Case 5.  $G_0=F_4(3), E_{\ell}(3)$, or ${^2}E_6(3)$. \\

This is essentially the same argument as the previous case.
$x$ inverts $y$ and so acts on $P$, the normalizer of the a long
root subgroup $Y = \langle y \rangle$.  Note that the unipotent
radical $Q$ of $P$ is extraspecial, and so $x$ cannot act trivially
on $Q/Y$.   If $x$ is not in the solvable radical of $P$, the
result follows by induction.  If $x$ is in the solvable
radical of $P$, then $P/Q$ must centralize $x$ acting on
$Y/Q$, and so $x$ must act as inversion on $Q/Y$, whence
it centralizes $Y$, a contradiction.  \\

\noindent Case 6.  $G_0 = \PSU(n,2), n > 2$. \\

Any such involution $x$ can be viewed as acting (possibly semilinearly)
on the natural module with $x$ inverting a psuedoreflection $y$.
Thus, $x$ leaves invariant the fixed hyperplane of $y$, and so
$x$ embeds in the normalizer of $\GU(n-1,2)$.  It clearly is not central
on the hyperplane and so not in the solvable radical unless $n=4$.
Since $\PSU(4,2)=\PSp(4,3)$, this is a case we have already dealt with. \\

\noindent Case 7.  $G_0 = {^3}D_4(3)$.  \\

It follows by \cite{MSW} that there are three conjugates of $x$
generating $G_0$. \\

\noindent Case 8.  $G_0 = G_2(3)$. \\

If $x$ is inner, it follows by \cite{MSW} that three conjugates
of $x$ generate $G_0$.  If $x$ is outer, then $x$ interchanges
short root elements and long root elements and so cannot invert
either type of element, whence the result holds in this case. \\

We have now dealt with all cases, and so the proof is complete.

\section{A classification free approach}
The purpose of this section is to explore what can be proved using
only elementary means. \par

For a prime $q$ we let \o{q}{G} be the largest normal $q$-subgroup
of $G$. If $G\not=1$ is solvable we let \fh{G} denote the Fitting
height of $G$. This is the smallest integer $n$ such that $G$
possesses a series
\[
    1=F_{0}\normal F_{1}\normal \cdots \normal F_{n}=G
\]
with $F_{i+1}/F_{i}$  nilpotent for all $i$. The trivial group has
Fitting height 0; a nontrivial nilpotent group has Fitting height
1; and if $G\not=1$ then $\fh{G/\f{G}}=\fh{G}-1$.

If $G\not=1$ is solvable we define
\[
    \sfit{G}\ =\ \bigcap\{ K\normal G\ |\ \fh{G/K}<\fh{G} \}.
\]
Now $G/\sfit{G}$ is isomorphic to a subgroup of a direct product
of groups each with Fitting height less than \fh{G}. Thus
$\fh{G/\sfit{G}}<\fh{G}$. It follows that
\[
    1\not=\sfit{G}\leq\f{G}
\]
and that \sfit{G} is the unique smallest normal subgroup of $G$
such that the corresponding quotient group has Fitting height less
than the Fitting height of $G$.

\begin{lemma}\label{p1}
    Let $H$ be a subgroup of the solvable group $G\not=1$.
    If $\fh{H}=\fh{G}$ then
    \[
        \sfit{H}\leq\sfit{G}\leq\f{G}.
    \]
\end{lemma}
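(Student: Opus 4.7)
The second inclusion $\sfit{G}\le\f{G}$ has already been established in the discussion immediately preceding the lemma, so only $\sfit{H}\le\sfit{G}$ requires a proof.

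My plan is to work with the intersection $H\cap\sfit{G}$ and show it is a normal subgroup of $H$ whose quotient has Fitting height strictly less than $\fh{H}$; by the very definition of $\sfit{H}$ as the intersection of all such subgroups, this will force $\sfit{H}\le H\cap\sfit{G}\le\sfit{G}$.

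To carry this out, first I would apply the second isomorphism theorem to obtain the embedding
\[
    H/(H\cap\sfit{G})\ \isom\ H\sfit{G}/\sfit{G}\ \le\ G/\sfit{G}.
\]
Next I would use the (elementary and presumably to-be-recorded) fact that for solvable groups, Fitting height is monotone under taking subgroups: a Fitting series of $G/\sfit{G}$ intersects down to give a series of $H\sfit{G}/\sfit{G}$ whose factors are subgroups of nilpotent groups, hence nilpotent. Combining this monotonicity with the defining property $\fh{G/\sfit{G}}<\fh{G}$ yields
\[
    \fh{H/(H\cap\sfit{G})}\ \le\ \fh{G/\sfit{G}}\ <\ \fh{G}\ =\ \fh{H},
\]
where the final equality is the hypothesis. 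Thus $H\cap\sfit{G}$ is a normal subgroup of $H$ with $\fh{H/(H\cap\sfit{G})}<\fh{H}$, so it appears in the intersection defining $\sfit{H}$, and we conclude $\sfit{H}\le H\cap\sfit{G}\le\sfit{G}$, as desired.

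There is no real obstacle here; the argument is a direct application of the definition of $\sfit{\cdot}$ combined with a standard isomorphism theorem. The only point that needs to be verified (and is routine) is that Fitting height does not increase when passing to subgroups of a solvable group, which is immediate from the fact that subgroups of nilpotent groups are nilpotent.
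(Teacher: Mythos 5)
Your proof is correct and is essentially the paper's own argument: the paper also passes to $\ob{G}=G/\sfit{G}$, notes $\fh{\ob{H}}\le\fh{\ob{G}}<\fh{G}=\fh{H}$ using monotonicity of Fitting height under subgroups, and invokes the definition of $\sfit{H}$ to conclude $\sfit{H}\le\sfit{G}$. You have merely spelled out the isomorphism $H/(H\cap\sfit{G})\isom H\sfit{G}/\sfit{G}$ that the paper leaves implicit.
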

\begin{proof}
    Set $\ob{G}=G/\sfit{G}$, so that $\fh{\ob{G}}<\fh{G}$.
    Then $\fh{\ob{H}}\leq\fh{\ob{G}}<\fh{G}=\fh{H}$ so
    the definition of \sfit{H} implies that $\sfit{H}\leq\sfit{G}$.
    We have already seen that $\sfit{G}\leq\f{G}$.
\end{proof}

\begin{lemma}\label{p2}
    Let $G$ be a solvable group, let $N\normal G$,
    set $\ob{G}=G/N$ and suppose that $\ob{G}\not=1$.
    Then the following are equivalent:
    \begin{enumerate}
        \item[(i)]   $\ob{\sfit{G}}\not=1$.
        \item[(ii)]   $\fh{\ob{G}}=\fh{G}$.
        \item[(iii)]   $\sfit{\ob{G}}=\ob{\sfit{G}}$.
    \end{enumerate}
\end{lemma}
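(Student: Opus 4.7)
The plan is to close the cycle (i) $\Rightarrow$ (ii) $\Rightarrow$ (iii) $\Rightarrow$ (i), invoking only the defining universal property of $\sfit{\cdot}$ (smallest normal subgroup whose quotient has strictly smaller Fitting height), together with the monotonicity $\fh{\ob{G}}\le\fh{G}$ valid for arbitrary quotients. No deeper structural theory beyond what has already been set up just above the lemma appears to be needed.

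For (i) $\Leftrightarrow$ (ii) the cleanest route is to translate both conditions into the same containment. Condition (i) says literally $\sfit{G}\not\le N$. For (ii), the inequality $\fh{\ob{G}}\le\fh{G}$ is automatic, and strict inequality holds precisely when $N$ is one of the normal subgroups appearing in the intersection defining $\sfit{G}$, i.e.\ precisely when $\sfit{G}\le N$. Thus (ii) is also equivalent to $\sfit{G}\not\le N$, and we are done.

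For (ii) $\Rightarrow$ (iii) I would verify both inclusions separately. The inclusion $\sfit{\ob{G}}\le\ob{\sfit{G}}$ follows from the fact that $\ob{G}/\ob{\sfit{G}}\isom G/N\sfit{G}$ is a quotient of $G/\sfit{G}$, hence has Fitting height strictly below $\fh{G}=\fh{\ob{G}}$, so $\ob{\sfit{G}}$ is admissible in the intersection defining $\sfit{\ob{G}}$. For the reverse inclusion I would invoke the correspondence theorem to lift $\sfit{\ob{G}}$ to $K\normal G$ with $N\le K$; then $G/K\isom\ob{G}/\sfit{\ob{G}}$ has Fitting height strictly less than $\fh{\ob{G}}=\fh{G}$, so the universal property of $\sfit{G}$ forces $\sfit{G}\le K$, whence $\ob{\sfit{G}}\le K/N=\sfit{\ob{G}}$.

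Finally, (iii) $\Rightarrow$ (i) is immediate: the remark made just before Lemma \ref{p1} shows that $\sfit{H}\ne 1$ for every nontrivial solvable group $H$, so applied to $H=\ob{G}$ it gives $\sfit{\ob{G}}\ne 1$, and (iii) then yields $\ob{\sfit{G}}\ne 1$. The entire argument is formal manipulation of the defining property of $\sfit{\cdot}$, and I do not expect any real obstacle; the only point that requires a moment of care is the reverse inclusion in (ii) $\Rightarrow$ (iii), which needs the hypothesis (ii) to guarantee that $\fh{\ob{G}/\sfit{\ob{G}}}<\fh{G}$ (rather than merely $<\fh{\ob{G}}$).
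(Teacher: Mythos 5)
Your proposal is correct and follows essentially the same route as the paper: (i) and (ii) are both identified with the containment $\sfit{G}\not\leq N$, the two inclusions for (iii) are obtained exactly as in the paper (quotient of $G/\sfit{G}$ for one direction, lifting $\sfit{\ob{G}}$ to its full preimage $K$ for the other), and (iii) $\Rightarrow$ (i) uses $\sfit{\ob{G}}\not=1$. Your closing remark about where hypothesis (ii) is genuinely needed is exactly the right point of care.
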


\begin{proof}
    Suppose that $\ob{\sfit{G}}\not=1$.
    Then $\sfit{G}\not\leq N$ so the definition of \sfit{G}
    implies that $\fh{\ob{G}}=\fh{G}$.
    Thus (i) implies (ii).

    Suppose that $\fh{\ob{G}}=\fh{G}$.
    Now $\ob{G}/\ob{\sfit{G}}$ is a homomorphic image of $G/\sfit{G}$ so
    $\fh{\ob{G}/\ob{\sfit{G}}}\leq\fh{G/\sfit{G}}<\fh{G}=\fh{\ob{G}}$
    whence $\sfit{\ob{G}}\leq\ob{\sfit{G}}$.
    Let $K$ be the full inverse image of \sfit{\ob{G}} in $G$.
    Then $G/K\isom\ob{G}/\sfit{\ob{G}}$ so $\fh{G/K}<\fh{\ob{G}}=\fh{G}$
    whence $\sfit{G}\leq K$ and then $\ob{\sfit{G}}\leq\ob{K}=\sfit{\ob{G}}$.
    We deduce that $\sfit{\ob{G}}=\ob{\sfit{G}}$.
    Thus (ii) implies (iii).

    Since $\ob{G}\not=1$ we have $\sfit{\ob{G}}\not=1$ so (iii) implies (i).
\end{proof}

\bigskip

\begin{lemma}\label{p3}
    Suppose that the solvable group $G$ possesses a unique minimal normal subgroup $V$. Then $V$ acts transitively by conjugation on the set of complements to $V$ in $G$.
\end{lemma}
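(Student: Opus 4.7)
Since $V$ is a minimal normal subgroup of the solvable group $G$, $V$ is an elementary abelian $p$-group for some prime $p$. Fix a complement $H$ of $V$ in $G$. First I would observe that $\c{H}{V}$ is $H$-invariant and centralized by the abelian $V$, hence normal in $G = VH$; since $\c{H}{V} \cap V = 1$, the uniqueness of the minimal normal subgroup forces $\c{H}{V} = 1$. Thus $H$ acts faithfully on $V$, and by the minimality of $V$ (any $H$-invariant subgroup of $V$ is automatically $G$-invariant, $V$ being abelian), $V$ is irreducible as an $\mathbb{F}_p[H]$-module.

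The key step is to produce a nontrivial normal $p'$-subgroup $N$ of $H$ with $\c{V}{N} = 1$. Since any $p$-group acting on a nonzero $\mathbb{F}_p$-space has a nonzero fixed vector, $\c{V}{\o{p}{H}}$ is a nonzero $H$-invariant subspace of the irreducible $V$ and so equals $V$; by faithfulness, $\o{p}{H} = 1$. Because $H$ is solvable (the case $H = 1$ being vacuous), $\f{H}$ is nontrivial and, with $\o{p}{H} = 1$, is a $p'$-group. Set $N := \f{H}$. The same irreducibility-plus-faithfulness argument then gives $\c{V}{N} = 1$, since otherwise $\c{V}{N}$ would be a proper nonzero $H$-submodule of the irreducible $V$.

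Now let $K$ be an arbitrary complement to $V$ in $G$. Because $VN \normal G$, an order count gives $|K \cap VN| = |N|$, so $K \cap VN$ is a complement to $V$ in $VN$; since $|V|$ and $|N|$ are coprime, Schur--Zassenhaus yields $v \in V$ with $v^{-1}(K \cap VN)v = N$, and after replacing $K$ by $v^{-1} K v$ we may assume $N \leq K$. Every element of $K$ then has a unique expression $h \sigma(h)$ with $h \in H$ and $\sigma(h) \in V$, and the subgroup law makes $\sigma \colon H \to V$ a $1$-cocycle with $\sigma(n) = 1$ for all $n \in N$. Applying the cocycle identity to the relation $nh = h \cdot n^h$ (valid since $N \normal H$) gives $\sigma(h) = \sigma(h)^{n^h}$ for every $h \in H$ and $n \in N$, so $\sigma(h) \in \c{V}{N} = 1$; hence $\sigma \equiv 1$ and $K = H$, showing that the original $K$ was $V$-conjugate to $H$. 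The main obstacle in this plan is the middle step: both solvability of $H$ (to produce $\f{H} \neq 1$) and uniqueness of the minimal normal subgroup (to force faithfulness, hence $\o{p}{H} = 1$ and $\c{V}{N} = 1$) are essential for $N$ to exist.
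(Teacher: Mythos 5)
Your proof is correct, but it takes a genuinely different route from the paper's. The paper fixes a complement $A$, takes a \emph{minimal} normal subgroup $Q$ of $A$ (an elementary abelian $q$-group), shows $\c{V}{Q}=1$ exactly as you show $\c{H}{V}=1$ (uniqueness of the minimal normal subgroup), deduces $q\not=r$ where $V$ is an $r$-group, and then observes that $Q$ is a Sylow $q$-subgroup of the normal subgroup $K=QV$ and that every complement to $V$ in $G$ is the normalizer of a Sylow $q$-subgroup of $K$; Sylow's theorem in $K$ finishes in one line. You instead work with $N=\f{H}$, use Schur--Zassenhaus to reduce to complements containing $N$, and kill the resulting $1$-cocycle by the fixed-point-freeness of $N$ on $V$. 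Both arguments are sound; yours is essentially a direct verification that $H^1(H,V)=0$ via the normal subgroup $N$ with $\c{V}{N}=1$, which costs more machinery (Schur--Zassenhaus plus the cocycle computation) but is equally standard, while the paper's version needs only Sylow's theorem and a single prime $q$. The only point in your write-up that deserves an explicit word is the claim that the Schur--Zassenhaus conjugating element can be taken in $V$: this follows immediately by writing the conjugating element $g\in VN$ as $g=vn$ and noting that $n$ normalizes $N$ (or, for general complements $X,Y$ of $V$ in $VN$ with $X^g=Y$, by writing $g=xv$ with $x\in X$, $v\in V$, so that $X^v=Y$). With that remark included, your argument is complete.
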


\begin{proof}
    Suppose that $A$ is a complement to $V$ and let $Q$ be a minimal normal subgroup of $A$, so that $Q$ is a $q$-group for some prime $q$. Set $K=QV$. Now $\c{V}{Q}=1$ since otherwise $Q$ would be another minimal normal subgroup of $G$. It follows that $V$ is an $r$-group for some prime $r\not=q$. Then $Q$ is a Sylow $q$-subgroup of $K$ and any complement to $V$ in $G$ is the normalizer of a Sylow $q$-subgroup of $K$. The result now follows from Sylow's Theorem.
\end{proof}

The following  extends a result that appears in \cite[page
82]{mw}.

\begin{lemma}\label{t1}
    Let $G$ be a solvable group that possesses an element $a$
    such that $G=\gen{a^{G}}$.  Let $k$ be a field.
    Let $V$ be a nontrivial irreducible $kG$-module.
    Then
    \[
        \dim\c{V}{a}\ \leq\ \frac{3}{4}\dim V.
    \]
\end{lemma}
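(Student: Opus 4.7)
The plan is to proceed by induction on $|G|$, via Clifford theory after two standard reductions.

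Reductions. I first reduce to $V$ being a faithful $G$-module: if $K = \ker(G \to \GL(V)) \ne 1$, then $a \notin K$---otherwise $G = \gen{a^G} \leq K$ acts trivially, contradicting that $V$ is nontrivial---and replacing $G$ by $G/K$ preserves all hypotheses while strictly decreasing $|G|$. I then extend scalars to an algebraic closure $\bar k$; this decomposes $V$ into Galois-conjugate absolutely irreducible summands of equal dimension, and since $\dim \c{V}{a}$ is preserved under base change, it suffices to prove the bound in the absolutely irreducible case.

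Clifford split. Let $N \normal G$ be a minimal normal subgroup, elementary abelian of prime exponent $p$, and decompose $V|_N = V_1 \oplus \cdots \oplus V_r$ into $N$-homogeneous components, permuted transitively by $G$. In the imprimitive case $r > 1$, the element $a$ must permute the $V_i$ nontrivially (otherwise $\gen{a^G}$ would fix each $V_i$, contradicting transitivity). A $\gen{a}$-orbit on the $V_i$ of length $\ell \geq 2$ contributes at most $1/\ell \leq 1/2$ of its summand's dimension to $\c{V}{a}$. For a fixed component $V_i$, set $H_i := \stab{G}{V_i}$ and $H_{i,0} := \gen{a^{H_i}}$; by Clifford applied to $H_{i,0} \normal H_i$, the restriction $V_i|_{H_{i,0}}$ is a sum of $H_i$-conjugate irreducibles, either all trivial (so $a$ acts trivially on $V_i$) or all nontrivial (where induction---valid since $|H_{i,0}| < |G|$---gives the $3/4$ bound on each). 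Combining these contributions and using transitivity of $G$ on $\{V_1,\ldots,V_r\}$ together with $G = \gen{a^G}$ to control the number of $V_i$ on which $a$ can act trivially, the overall $3/4$ bound follows.

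Primitive case. When $r = 1$, $N$ acts homogeneously hence as scalars by Schur's lemma, and faithfulness forces $N$ to be central cyclic of prime order. Here I would invoke the classical structure of primitive solvable linear groups over $\bar k$: the Fitting subgroup $\f{G}$ is a central product of extraspecial $q$-groups with cyclic center, $\c{G}{\f{G}} = \z{\f{G}}$, and $G/\f{G}$ embeds into the product of symplectic-type groups acting on $\f{G}/\z{\f{G}}$. The irreducible $V$ restricted to $\f{G}$ is a tensor product of Heisenberg-type modules, and using $G = \gen{a^G}$ together with this rigid structure one bounds $\dim \c{V}{a}$ by analyzing the action of $a$ on each tensor factor.

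Main obstacle. The principal difficulty is the primitive case: no further Clifford reduction is available, and one must exploit the tight structure of primitive solvable linear groups together with the generation hypothesis. A secondary but still delicate point appears in the imprimitive step---one must pass from $H_i$ to $H_{i,0} = \gen{a^{H_i}}$ to apply induction, and the permutation-theoretic bookkeeping (showing that $a$ cannot act trivially on too many of the $V_i$, consistent with transitivity and $G = \gen{a^G}$) requires care to squeeze out the $3/4$ constant rather than a weaker bound.
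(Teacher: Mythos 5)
Your overall strategy (Clifford theory plus induction) is reasonable, but as written the proposal has genuine gaps at precisely the two places you flag as ``delicate,'' and neither is a routine matter of bookkeeping. First, the imprimitive count does not close. With $t$ blocks on which $a$ acts trivially, $f$ fixed blocks on which it acts nontrivially, and $m$ moved blocks, your scheme gives $\dim\c{V}{a}\le te+\tfrac{3}{4}fe+\tfrac{1}{2}me$, and comparing with $\tfrac{3}{4}(t+f+m)e$ shows you need $t\le m$. You offer no argument for this, and I do not see how ``transitivity plus $G=\gen{a^G}$'' yields it directly. The input that actually makes the count work is a purely permutation-theoretic fact: in any transitive action of a solvable group $G=\gen{a^G}$ on $r>1$ points, $a$ fixes at most $r/2$ points (reduce to a primitive quotient, where $a$ acts nontrivially because the kernel is normal, and use that a primitive solvable group is affine over a prime field, so a nontrivial affine map fixes at most half the points). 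This gives at most $3r/4$ orbits of $\gen{a}$ on the blocks, and since each orbit contributes at most $e$ to $\c{V}{a}$ one gets $\dim\c{V}{a}\le \tfrac{3}{4}re$ with no inductive appeal to the fixed blocks at all. Without this lemma (or a proof of $t\le m$), your imprimitive step is incomplete. There is also a technical wrinkle in your inductive set-up: $H_{i,0}=\gen{a^{H_i}}$ need not equal $\gen{a^{H_{i,0}}}$, so the inductive hypothesis does not apply to it as stated.

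Second, and more seriously, the primitive case is not an argument but a declaration of intent. Invoking the structure of primitive solvable linear groups (extraspecial Fitting subgroup, tensor decompositions) and asserting that ``one bounds $\dim\c{V}{a}$'' is exactly the hard part, and it is far from clear that the constant $3/4$ falls out of that analysis. The paper avoids this case entirely by arguing by contradiction: assuming $\dim\c{V}{a}>\tfrac{3}{4}\dim V$, every commutator $[g,a]$ fixes a subspace of dimension $>\tfrac{1}{2}\dim V$ and every $[g,a,h]$ has a nonzero fixed vector; taking $N\normal G$ minimal subject to being noncentral, one has $N'\le \z{G}$, and if $N$ were nonabelian a suitable $[g,a,h]$ would be a nontrivial scalar with a fixed vector, a contradiction. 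Hence $N$ is abelian and noncentral, so the eigenspace decomposition with $r>1$ always exists and only the (permutation-counting) imprimitive analysis is ever needed. I would recommend restructuring your proof around that idea, or else supplying a complete argument for the primitive case; as it stands the proposal does not constitute a proof.
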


\begin{proof}  First note that by replacing $k$
by $\mathrm{End}_{kG}(V)$, we may assume that $V$ is absolutely
irreducible.  Then we can extend scalars and assume that
$k$ is algebraically closed.  Clearly, we may assume that
$G$ acts faithfully on $V$.

    Assume false, so that $\dim\c{V}{a}>\frac{3}{4}\dim V$.
    We will construct a normal subgroup of $G$ that
    has more than one homogeneous component on $V$.
    The proof then proceeds by analyzing the permutation action
    of $G$ on those components.

    Let $g,h\in G$.
    The subspaces \c{V}{a} and \c{V}{a^{g}} both have
    dimension greater than $\frac{3}{4}\dim V$
    so their intersection has dimension greater than $\frac{1}{2}\dim V$.
    Since $[g,a]$ acts trivially on this intersection it follows that
    \[
        \dim\c{V}{[g,a]}\ >\ \frac{1}{2}\dim V.
    \]
    Repeating this argument,
    we deduce that
    \begin{equation} \label{eq1}
        \c{V}{[g,a]}\not=0\ \ \mbox{and}\ \ \c{V}{[g,a,h]}\not= 0
    \end{equation}
    for all $g,h\in G$.

    Since $G$ acts irreducibly and faithfully on $V$ we have
    \begin{equation}\label{eq2}
        \c{V}{z} = 0
    \end{equation}
    for all $z\in\z{G}^{\#}$.
    In particular, $a\not\in Z(G)$.
    Let $N$ be a normal subgroup of $G$ chosen minimal subject to $N$ is not central
in $G$.  Now $N$ is solvable, so $N' < N$, whence $N' \le Z(G)$.   We claim
that $N$ is abelian.  If not, then $Z(N) < N$, whence $Z(N) \le Z(G)$.

    Since $G=\gen{a^{G}}$ and $N$ is not central, we see that $[a,N] \ne 1$.
    Choose $g\in N$ such that $[g,a]\not=1$.  Since $[g,a]$ fixes a nonzero
vector in $V$, $[g,a]$ is a noncentral element of $N$.  Now choose $h \in N$
with $1 \ne [g,a,h] \in N' \le \z{G}$.  Thus, $[g,a,h]$ is a nontrivial scalar
on $V$, but by Equations \ref{eq1} and \ref{eq2}, this is not the case.  Thus, $N$ is abelian.

    Since $N$ is abelian and  not central in $G$,
$V=V_1 \oplus \ldots \oplus V_r$ is a direct sum of the $N$ eigenspaces
$V_i$ with $r > 1$.  Set $\Omega=\{V_1, \ldots, V_r\}$.
Since $V$ is irreducible, $G$ acts transitively on $\Omega$.
Since $G$ is generated by the conjugates of $a$, $a$ acts nontrivially
on $\Omega$.  Set $e = \dim V_i$.

We claim that $a$ fixes no more than $d/2$ points in any
transitive permutation action of $G$ of degree $d > 1$.  It suffices
to prove this for a primitive action  (if $a$ fixes no more than
$1/2$ the blocks, it fixes no more than $1/2$ the points).   In
any primitive action of $G$, $a$ acts nontrivially.  Since
a primitive permutation action of a finite solvable group consists
of affine transformations of a vector space over a prime field,
the claim follows.

If $\Delta$ is an $a$-orbit on $\Omega$ and $V_{\Delta} = \sum_{i \in \Delta} V_i$,
then $\dim C_{V_{\Delta}}(a) \le e$.  Thus,
$\dim C_V(a) \le e f$ where $f$ is the number of orbits of $a$ on $\Omega$.
Since $a$ fixes at most $r/2$ points, it has at most $3r/4$ orbits on $\Omega$,
whence $\dim C_V(a) \le (3/4) \dim V$, a contradiction.
\end{proof}

\begin{lemma}\label{t2}
    Let $G$ be a solvable group and let $a$ be an element of $G$ with prime order.
    Suppose that $A$ is a subgroup of $G$ with the following properties:
    \begin{enumerate}
        \item[(i)]   $A=\gen{a_{1},\ldots,a_{5}}$ where $a_{1},\ldots,a_{5}$ are
            conjugate to $a$ in $G$ and conjugate to one another in $A$.
        \item[(ii)]   $A$ has maximal Fitting height subject to (i).
    \end{enumerate}
    Then
    \[
        \sfit{A}\leq\f{G}.
    \]
\end{lemma}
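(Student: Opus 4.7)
Argue by contradiction: suppose $\sfit{A}\not\le\f{G}$. The goal is to exhibit conjugates $b_1,\dots,b_5$ of $a$ in $G$ that are mutually $B$-conjugate in $B:=\gen{b_1,\dots,b_5}$ and satisfy $\fh{B}>\fh{A}$, directly contradicting the maximality in (ii).

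Since $\sfit{A}\le\f{A}$ is nilpotent but not contained in $\f{G}$, there is a prime $p$ with $\o{p}{\sfit{A}}\not\le\o{p}{G}$. This forces some $G$-chief factor $V$ of $p$-power order (sitting inside $\f{G}$ above $\o{p}{G}$) on which $\sfit{A}$, and hence each $a_i$, acts nontrivially. Passing to an irreducible $A$-constituent of $V$ and to the faithful quotient of $A$ on it, one may view $V$ as a nontrivial irreducible $\mathbb{F}_pA$-module. Because $A=\gen{a_1^A}$, Lemma~\ref{t1} applies and gives
\[
    \dim\c{V}{a_i}\ \le\ \tfrac{3}{4}\dim V\qquad(i=1,\dots,5).
\]

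Fix $x_{ij}\in A$ with $a_i^{x_{ij}}=a_j$. The $\tfrac{1}{4}$-defect just produced is used to select $v\in V$ avoiding all centralizers $\c{V}{a_i}$, and the remaining $v_i$ are defined as the $A$-translates of $v$ obtained by applying fixed words in the $x_{ij}$. One then sets
\[
    b_i:=v_i^{-1}a_iv_i,\qquad B:=\gen{b_1,\dots,b_5}.
\]
Each $b_i$ is $G$-conjugate to $a$, and by design the elements $v_i^{-1}x_{ij}v_j$ lie in $B$ and conjugate $b_i$ to $b_j$, so the $b_i$ are mutually $B$-conjugate. The nonzero commutators $[a_i,v_i]\in V$ then produce a nontrivial $B\cap V\le\f{G}$ living in a strictly higher layer of the Fitting series than $A$ manifests on its own, yielding $\fh{B}\ge\fh{A}+1$ and the required contradiction.

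\textbf{Main obstacle.} The delicate point is arranging the tuple $(v_1,\dots,v_5)$ so as to simultaneously preserve mutual $B$-conjugacy of the $b_i$ and force $B\cap V\ne 1$ to genuinely increase the Fitting height (rather than merely enlarge $\f{B}$). The hypothesis of exactly five conjugates in (i) is calibrated against the $\tfrac{3}{4}$-bound of Lemma~\ref{t1}: five codimensions of at least $\tfrac{1}{4}\dim V$ give just enough room to select a working $v$ and to propagate it consistently through the $A$-orbit structure, even for small $p$ where naive counting over $V$ would fail.
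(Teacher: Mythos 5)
Your plan correctly identifies Lemma~\ref{t1} as the engine and correctly senses that the number $5$ is calibrated against the bound $\codim C_V(a)\ge\frac{1}{4}\dim V$, but the construction you sketch does not close, and the two places where it breaks are exactly where the paper's proof does something different. First, ``select $v\in V$ avoiding all centralizers $C_V(a_i)$'' is a union bound over five subspaces of dimension up to $\frac{3}{4}\dim V$, which, as you yourself concede, fails over small fields; you offer no repair. The paper avoids any pointwise selection: after reducing to a minimal counterexample in which $V$ is the unique minimal normal subgroup of $G$, $G=AV$, and $A\cap V=1$, it shows via conjugacy of complements (Lemma~\ref{p3}) that for \emph{every} tuple $(u_1,\dots,u_5)\in V^5$ the group $\gen{a_1^{u_1},\dots,a_5^{u_5}}$ is again a complement and hence a $V$-conjugate of $A$, which forces $u_i\in C_V(a_i)v$ for a common $v$; that is, the map $V\to\prod_i V/C_V(a_i)$ is \emph{surjective}. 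Comparing dimensions gives $\dim V\ge 5\codim C_V(a)\ge\frac{5}{4}\dim V$, the contradiction. This is a linear-algebra count, immune to the field-size issue.

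Second, your endgame is unjustified. You want $B\cap V\ne1$ to force $\fh{B}\ge\fh{A}+1$. But $[a_i,v_i]=a_i^{-1}b_i$ need not lie in $B$ (only $b_i$ does), and even granting $B\cap V\ne1$: in the reduced situation $V$ is a minimal normal subgroup of $G=BV$ and $B\cap V\normal BV$ since $V$ is abelian, so $B\cap V\ne1$ forces $V\le B$, i.e.\ $B=G$ --- and a generation of $G$ by five conjugates of $a$ is precisely what the paper proves is impossible (it would contradict (ii), since $\fh{G}>\fh{A}$ by Lemma~\ref{p1} under the standing assumption $\sfit{A}\not\le\f{G}$). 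In other words, the object you are trying to construct cannot exist; the contradiction has to be extracted from the failure of every such $B$ to swallow $V$, not from the success of one of them. You are also missing the reduction that makes $V$ an actual elementary abelian normal subgroup complemented by $A$ (rather than an abstract chief factor, by whose ``elements'' you cannot conjugate), and the verification that hypothesis (ii) survives passage to $G/V$ --- the paper devotes a full paragraph to lifting a generating tuple of $\ob{B}$ to one of $B$.
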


\begin{proof}
    Assume false and let $G$ be a minimal counterexample,
    so that $\sfit{A}\not\leq\f{G}$.
    Let $V$ be a minimal normal subgroup of $G$ and set
    \[
        \ob{G} = G/V.
    \]
    Now $V$ is abelian so $V\leq\f{G}$.
    In particular, $\sfit{A}\not\leq V$ and then
    the definition of \sfit{A} implies that
    \begin{equation}\label{11}
        \fh{\ob{A}}=\fh{A}.
    \end{equation}

    We claim that \ob{A} satisfies (i) and (ii) when $G$ is replaced by \ob{G} and $a$ by \ob{a}.
    Certainly (i) is satisfied.
    As for (ii), let \ob{B} be a subgroup of \ob{G} such that
    $\ob{B}=\gen{\ob{b}_{1},\ldots,\ob{b}_{5}}$ with
    $\ob{b}_{1},\ldots,\ob{b}_{5}$ conjugate to
    \ob{a} in \ob{G} and conjugate to one another in \ob{B}.
    Let $b_{1}$ be a conjugate of $a$ that maps onto $\ob{b}_{1}$ and
    let $B$ be an inverse image of \ob{B} that is minimal subject to $b_{1}\in B$.
    Choose $g_{2},\ldots,g_{5}\in B$ such that $\ob{b}_{1}^{\ob{g}_{i}}=\ob{b}_{i}$.
    Then \gen{b_{1},b_{1}^{g_{2}},\ldots,b_{1}^{g_{5}}} is a subgroup of $B$
    that maps onto \ob{B}.
    The minimality of $B$ forces $B=\gen{b_{1},b_{1}^{g_{2}},\ldots,b_{1}^{g_{5}}}$
    and as $g_{2},\ldots,g_{5}\in B$ we see that
    $B$ is a subgroup of $G$ that satisfies (i).
    Consequently
    \[
        \fh{A}\geq\fh{B}.
    \]
    Now \ob{B} is a homomorphic image of $B$ so $\fh{B}\geq\fh{\ob{B}}$ and
    then using (\ref{11}) we have
    \[
        \fh{\ob{A}}\geq\fh{\ob{B}}.
    \]
    This proves the claim.

    The minimality of $G$ and the previous paragraph
    imply that $\sfit{\ob{A}}\leq\f{\ob{G}}$.
    Lemma~\ref{p2} and (\ref{11}) imply that $\sfit{\ob{A}}=\ob{\sfit{A}}$
    so we deduce that
    \begin{equation}\label{12}
        \ob{\sfit{A}}\ \leq\ \f{\ob{G}}.
    \end{equation}
    It follows readily that $V$ is the unique minimal normal subgroup of $G$.
    Indeed, if $U$ were another such subgroup then \gen{\sfit{A}^{G}} would embed into
    the nilpotent group $\f{G/U}\times\f{G/V}$,
    contrary to the fact that $\sfit{A}\not\leq\f{G}$.

    Since \sfit{A} and \f{G} are nilpotent and since $\sfit{A}\not\leq\f{G}$,
    there exists a prime $q$ such that $\o{q}{\sfit{A}}\not\leq\o{q}{G}$.
    Set $Q=\o{q}{\sfit{A}}$.
    By (\ref{12}) we have $\ob{Q}\leq\o{q}{\ob{G}}$.
    Let $K$ be the full inverse image of \o{q}{\ob{G}} in $G$,
    so that $Q\leq K\normal G$ and $K/V$ is a $q$-group.
    Now $G$ is solvable so $V$ is an elementary abelian $r$-group for some prime $r$.
    Moreover, $Q\not\leq\o{q}{G}$ so $K$ is not a $q$-group and hence $r\not=q$.
    Since $V$ is the unique minimal normal subgroup of $G$ we deduce that $\o{q}{G}=1$.

    We claim that
    \begin{equation}\label{13}
        \c{K}{V}\ =\ V.
    \end{equation}
    Indeed, choose $S\in\syl{q}{K}$.
    Since $K/V$ is a $q$-group we have $K=SV$ whence $\c{K}{V}=\c{S}{V}\times V$.
    Then $\c{S}{V}=\o{q}{\c{K}{V}}\leq\o{q}{K}\leq\o{q}{G}=1$,
    proving the claim.

    Suppose that $AV\not=G$.
    Then the minimality of $G$ implies that $Q\leq\o{q}{AV}$ so
    as $V\leq\o{r}{AV}$ we see that $[Q,V]=1$,
    contrary to (\ref{13}).
    We deduce that
    \begin{equation}\label{14}
        G\ =\ AV.
    \end{equation}

    If $\fh{A}=\fh{G}$ then Lemma~\ref{p1} implies that $Q\leq\o{q}{G}$,
    contrary to the choice of $q$.
    Thus $\fh{A}<\fh{G}$ and then the definition of $A$ implies that $G$ cannot be
    generated by 5 conjugates of $a$.
    Moreover, we have $A\not=G$ so using (\ref{14}) and
    the fact that $V$ is a minimal normal subgroup of $G$ we deduce that $A\cap V=1$,
    so $A$ is a complement to $V$ in $G$.

    Let $u_{1},\ldots,u_{5}\in V$ and
    set $C=\gen{a_{1}^{u_{1}},\ldots,a_{5}^{u_{5}}}$.
    Since $A=\gen{a_{1},\ldots,a_{5}}$ and
    since $G=AV$ we have $G=CV$.
    Now $G$ cannot be generated by 5 conjugates of $a$ and
    $V$ is a minimal normal subgroup of $G$ so $C\cap V=1$.
    In particular, $C$ is a complement to $V$.
    By Lemma~\ref{p3} there exists $v\in V$ such that $C^{v}=A$.
    Thus
    \[
        \gen{a_{1}^{u_{1}v},\ldots,a_{5}^{u_{5}v}}\ =\ A\ =\  \gen{a_{1},\ldots,a_{5}}.
    \]
    For each $i$ we have $u_{i}v\in V\normal G$ so
    \[
        [a_{i},u_{i}v] = a_{i}^{-1}a_{i}^{u_{i}v} \in A\cap V = 1,
    \]
    whence $u_{i}v\in\c{V}{a_{i}}$ and then $u_{i}\in\c{V}{a_{i}}v$.
    This proves that the natural map
    \[
        V\longrightarrow V/\c{V}{a_{1}}\times\ldots\times V/\c{V}{a_{5}}
    \]
    is surjective.
    Since all of the $a_{i}$ are conjugate to $a$ we deduce that
    \begin{equation}\label{15}
        \dim V\ \geq\ 5\codim\c{V}{a}.
    \end{equation}

    Now $V$ is an elementary abelian normal subgroup of $G$ so
    $V$ may be regarded as an $A$-module.
    Since $G=AV$ and since $V$ is a minimal normal subgroup of $G$
    we see that $V$ is an irreducible $A$-module.
    It follows from (\ref{13}) that the action of $A$ on $V$ is nontrivial.
    Since $A=\gen{a_{1}^{A}}$ and since $a_{1}$ is conjugate to $a$,
    we may apply Lemma~\ref{t1} to conclude that
    \[
        \codim\c{V}{a}\ \geq\ \frac{1}{4}\dim V.
    \]
    This contradicts (\ref{15}) and completes the proof of this lemma.
\end{proof}

\bigskip

\noindent The following lemma proves Theorem~A.

\begin{lemma}\label{t3}
    Let $\calC$\ be a conjugacy class of the group $G$.
    If every 10 members of $\calC$\ generate a solvable subgroup then
    $\calC$\ generates a solvable subgroup.
\end{lemma}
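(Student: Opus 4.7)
The plan is to derive a contradiction from a minimal counterexample by constructing a nontrivial nilpotent normal subgroup of $G$. The main tool will be Lemma~\ref{t2}: applied inside a suitably chosen $10$-generated solvable subgroup, it will lock a specific characteristic subgroup $\sfit{A}$ of a $5$-generated subgroup into the Fitting subgroup of the ambient group, and a pairwise application will then feed the Baer--Suzuki theorem.

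I would first pick a counterexample $(G,\calC)$ of smallest order, and among these, one minimizing the order of an element $a\in\calC$. A standard induction then reduces to the case where $|a|$ is prime: if $|a|$ has two distinct prime factors one writes $a=bc$ with $b,c$ commuting of smaller order and applies minimality to the classes of $b$ and $c$; if $|a|=p^e$ with $e\ge 2$, then $\gen{(a^p)^G}$ is solvable by minimality and factoring it out strictly reduces the order of $a$. With $a$ of prime order, Lemma~\ref{t2} becomes available.

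Next I would select, among all $5$-tuples $(a_1,\ldots,a_5)$ of $G$-conjugates of $a$ that are mutually conjugate in $A:=\gen{a_1,\ldots,a_5}$ with $A$ solvable, one for which $\fh{A}$ is maximal; the choice $a_i=a$ shows such tuples exist, so $\sfit{A}\ne 1$. The heart of the argument will be to show that $N:=\gen{\sfit{A}^G}$ is nilpotent. For any $h_1,h_2\in G$ the subgroup
\[
    G' \;:=\; \gen{a_1^{h_1},\ldots,a_5^{h_1},a_1^{h_2},\ldots,a_5^{h_2}}
\]
is generated by $10$ members of $\calC$ and is therefore solvable. Both $A^{h_1}$ and $A^{h_2}$ satisfy hypotheses (i) and (ii) of Lemma~\ref{t2} with respect to $G'$ (taking $a_1^{h_i}\in G'$ as the base element): condition (i) is preserved under conjugation, and maximality of $\fh{A^{h_i}}=\fh{A}$ inside $G\supseteq G'$ is inherited in $G'$. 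Hence $\sfit{A}^{h_i}=\sfit{A^{h_i}}\le\f{G'}$, and so $\gen{\sfit{A}^{h_1},\sfit{A}^{h_2}}\le\f{G'}$. Taking $h_1=1$ and $h_2=g$, for each $y\in\sfit{A}$ of $p$-power order I conclude that $\gen{y,y^g}$ is a $p$-group for every $g\in G$; Baer--Suzuki then gives that $\gen{y^G}$ is a normal $p$-subgroup of $G$, and summing over primes and generators shows that $N$ is a nontrivial nilpotent normal subgroup of $G$.

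Finally, $G/N$ inherits the hypothesis on $\calC$ and is strictly smaller, so by minimality $G/N$ is solvable, which together with solvability of $N$ contradicts non-solvability of $G$. The hard part will be the bookkeeping in the selection step: one must verify that maximality of $\fh{A}$ in $G$ transfers correctly to the conjugate $A^{h_i}$ inside the $10$-generated solvable subgroup $G'$, so that Lemma~\ref{t2} can be invoked there for both $A^{h_1}$ and $A^{h_2}$ simultaneously; this interlocking of $5$ generators of $A$ with $5$ more from a conjugate is exactly what dictates the numerical value $10=2\cdot 5$.
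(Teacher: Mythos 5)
Your proposal is correct and follows essentially the same route as the paper: choose $A=\gen{a_1,\ldots,a_5}$ of maximal Fitting height among such $5$-generated subgroups, apply Lemma~\ref{t2} inside the solvable subgroup $\gen{A,A^{g}}$ (generated by $10$ members of $\calC$) to place $\sfit{A}$ and $\sfit{A}^{g}$ in its Fitting subgroup, and finish with Baer--Suzuki. The only cosmetic differences are that you invoke Baer--Suzuki elementwise and then pass to the quotient by $\gen{\sfit{A}^{G}}$, whereas the paper applies the subgroup form of Baer's theorem directly to contradict triviality of the solvable radical of the minimal counterexample.
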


\begin{proof}
    Assume false and let $G$ be a minimal counterexample.
    Then $G$ possesses no nontrivial normal solvable subgroups and
    we may suppose that the elements of $\calC$\ have prime order.
    Let $a\in\calC$ and let $A$ be a subgroup of $G$ that satisfies
    \begin{enumerate}
        \item   $A=\gen{a_{1},\ldots,a_{5}}$ where $a_{1},\ldots,a_{5}$ are conjugate
                to $a$ in $G$ and conjugate to one another in $A$, and
        \item   $A$ has maximal Fitting height subject to (i).
    \end{enumerate}
    Replacing $A$ by a suitable conjugate, we may suppose that $a_{1}=a$.
    Let $Q=\sfit{A}$, so that $Q\not=1$.
    Let $g\in G$ and set $H=\gen{A,A^{g}}$.
    By hypothesis, $H$ is solvable so Lemma~\ref{t2} with $H$ in place of $G$,
    yields $Q\leq\f{H}$.
    Similarly, $Q^{g}\leq\f{H}$.
    We deduce that \gen{Q,Q^{g}} is nilpotent for all $g\in G$.
    The Baer--Suzuki Theorem implies that \[
        Q\ \leq\ \f{G}. \]
    This contradicts the fact that $G$ has no nontrivial normal solvable subgroups
    and completes the proof.
\end{proof}

\noindent Using a slightly longer argument we are able to replace 10 by 7.
First we need:

\begin{lemma}\label{t3a}
    Suppose $A \not= 1$ is solvable and that $\sfit{A} \leq \z{A}$.
    Then $A$ is abelian.
\end{lemma}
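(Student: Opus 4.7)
The plan is to assume $A$ is nonabelian and derive a contradiction. Introduce $Z = \z{A}$ and $F = \f{A}$. Two preliminary observations will be needed: $Z \le F$ (immediate from the classical inclusion $\c{A}{F} \le F$ for solvable $A$), and $F > Z$ (otherwise $A \le \c{A}{F} \le F = Z$ would force $A$ abelian, against our assumption).

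The key identification is $\f{A/Z} = F/Z$. The inclusion $F/Z \le \f{A/Z}$ is automatic. For the reverse, I would lift any nilpotent normal subgroup of $A/Z$ to a subgroup $H \normal A$ containing $Z$ with $H/Z$ nilpotent; since $Z \le \z{H}$, the extension $1 \to Z \to H \to H/Z \to 1$ is central, and a central extension of a nilpotent group by a central subgroup is itself nilpotent (if $H/Z$ has class $c$ then $H$ has class at most $c+1$). Hence $H \le F$.

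Once this identification is secured, the contradiction is direct. Because $A/Z \ne 1$ and $\f{A/Z} = F/Z \ne 1$, the definition of Fitting height yields
\[
    \fh{A/Z} \ =\ 1 + \fh{(A/Z)/(F/Z)} \ =\ 1 + \fh{A/F} \ =\ \fh{A},
\]
the last equality using $\fh{A} = 1 + \fh{A/F}$ for any nontrivial solvable $A$. On the other hand, the hypothesis $\sfit{A} \le Z$ together with the equivalence (i)$\Leftrightarrow$(ii) of Lemma~\ref{p2} (applied with $N = Z$, noting $A/Z \ne 1$) forces $\fh{A/Z} < \fh{A}$, a contradiction.

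The only step that takes any thought is recognizing that for central $Z$ one has the clean identity $\f{A/Z} = F/Z$, which rests on the standard fact that central extensions preserve nilpotency. After that, everything else is routine bookkeeping with the Fitting series and a direct appeal to Lemma~\ref{p2}.
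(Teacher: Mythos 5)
Your proof is correct and rests on the same essential fact as the paper's: a central extension of a nontrivial solvable group has the same Fitting height as the quotient, which contradicts $\fh{A/\sfit{A}}<\fh{A}$ unless $A=\sfit{A}\leq\z{A}$. The only difference is cosmetic — you pass to $A/\z{A}$ and supply a self-contained proof of the key fact via the identity $\f{A/\z{A}}=\f{A}/\z{A}$, whereas the paper quotes the closure of the class of groups of Fitting height $n$ under central extensions and concludes directly that $A/\sfit{A}=1$.
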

\begin{proof}
    We have \[
        \fh{A/\sfit{A}} < \fh{A}.
    \]
    On the other hand, for any $n \geq 1$,
    the class if solvable groups of Fitting height $n$ is closed
    under central extensions.
    This forces $\fh{A/\sfit{A}} = 0$, whence $A = \sfit{A}$.
    As $\sfit{A} \leq \z{A}$,
    the conclusion follows.
\end{proof}
\begin{theorem}\label{t4}
    Let $\calC$\ be a conjugacy class of the group $G$.
    If every 7 members of $\calC$\ generate a solvable subgroup then
    $\calC$\ generates a solvable subgroup.
\end{theorem}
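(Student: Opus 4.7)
I would plan the proof by following the same minimal counterexample strategy as Lemma~\ref{t3}, but strengthening Lemma~\ref{t2} so that the Baer--Suzuki argument can be carried out using fewer conjugates. Let $(G, \calC)$ be a minimal counterexample. As in Lemma~\ref{t3} we may assume $\f{G} = 1$ and that $\calC$ consists of conjugates of a prime-order element $a$; the aim is to construct a nontrivial nilpotent subgroup $Q$ of $G$ with $\langle Q, Q^g \rangle$ nilpotent for every $g \in G$, whence Baer--Suzuki gives the contradiction $Q \le \f{G} = 1$.

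Following Lemma~\ref{t3}, I would take $A$ to be a subgroup generated by $m$ pairwise $A$-conjugate conjugates of $a$, chosen to have maximal Fitting height, and set $Q = \sfit{A} \ne 1$. The Baer--Suzuki condition would follow if for each $g$ the subgroup $H_g = \langle A, A^g \rangle$ is solvable and satisfies $Q, Q^g \le \f{H_g}$. Since $H_g$ is generated by at most $2m$ conjugates of $a$, the scheme goes through whenever $2m$ is within the hypothesised conjugate bound; the $k = 10$ proof uses $m = 5$, and to reach $k = 7$ I would take $m = 3$, so that $H_g$ is generated by at most six members of $\calC$ and is hence solvable by the $7$-conjugate hypothesis (since any six conjugates may be supplemented to seven). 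This demands proving Lemma~\ref{t2} for $A$ generated by only three pairwise $A$-conjugate conjugates, which is the heart of the argument.

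The main obstacle is this sharpening. The proof of Lemma~\ref{t2} reduces to $A$ being a complement to a minimal normal subgroup $V$ in $G = AV$; there the Sylow-complement argument gives $\dim V \ge m \cdot \codim \c{V}{a}$ while Lemma~\ref{t1} supplies $\codim \c{V}{a} \ge \tfrac14 \dim V$, and for $m = 5$ these are incompatible. For $m = 3$ they are not, and the key new ingredient is Lemma~\ref{t3a}: the subgroup $A$ is generated by pairwise conjugate prime-order elements, so either $A$ is abelian, in which case $A$ is cyclic of prime order and the module-theoretic contradiction is easy, or $A$ is non-abelian and hence $\sfit{A} \not\le \z{A}$. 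In the non-abelian case I would use non-central elements of $\sfit{A}$ to restrict the permutation action of $A$ on the set of homogeneous components of $V$, ruling out the equality case of Lemma~\ref{t1} (which requires $a$ to have order $2$ and to fix exactly half of the components, swapping the rest in pairs). Working through this refined module analysis, especially the case where $a$ is an involution, is where I expect the bulk of the ``lengthening'' alluded to in the introduction to lie.

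Once the sharpened Lemma~\ref{t2} is established, the proof of Theorem~\ref{t4} concludes exactly as in Lemma~\ref{t3}: for each $g \in G$, $H_g$ is solvable by hypothesis, the sharpened lemma gives $Q, Q^g \le \f{H_g}$, so $\langle Q, Q^g \rangle \le \f{H_g}$ is nilpotent for every $g$, and the Baer--Suzuki theorem forces $Q \le \f{G} = 1$, contradicting $Q \ne 1$.
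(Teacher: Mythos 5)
Your overall scaffolding (minimal counterexample, trivial solvable radical, a maximal-Fitting-height subgroup $A$ with $Q=\sfit{A}\neq 1$, and a final appeal to Baer--Suzuki) matches the paper, but the mechanism you propose for getting from $10$ down to $7$ is not the paper's, and it contains a genuine gap. You want to shrink $A$ to three pairwise $A$-conjugate conjugates of $a$, so that $H_g=\gen{A,A^g}$ involves only six members of $\calC$, and you recognise that this forces you to re-prove Lemma~\ref{t2} with $m=3$. In that proof the two competing estimates are $\dim V\geq m\cdot\codim\c{V}{a}$ and the bound of Lemma~\ref{t1}; for $m=3$ you therefore need $\dim\c{V}{a}<\tfrac{2}{3}\dim V$ (strictly), whereas Lemma~\ref{t1} gives only $\dim\c{V}{a}\leq\tfrac{3}{4}\dim V$. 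Your plan to ``rule out the equality case'' of Lemma~\ref{t1} improves $\tfrac34$ only to something strictly below $\tfrac34$; it does not come close to $\tfrac23$, and nothing in your sketch addresses the range in between. Worse, the $\tfrac34$ bound is essentially attained: an involution in an imprimitive solvable linear group can fix half of the homogeneous components pointwise and swap the remaining ones in pairs (e.g.\ a transposition in $N\rtimes S_4$ acting on a sum of four $N$-eigenspaces), giving $\dim\c{V}{a}=\tfrac34\dim V$. So the strengthening your argument needs is false at the level of generality of Lemma~\ref{t1}, and it is not clear that the extra structure available in Lemma~\ref{t2} (where $V$ is a minimal normal subgroup complemented by $A$) rescues it. This is precisely the hard representation-theoretic territory the introduction defers to \cite{arpf} and \cite{pf}.

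The paper avoids this entirely by keeping $A$ generated by \emph{five} conjugates, so Lemma~\ref{t2} is used verbatim, and instead shrinking the subgroup fed to Baer--Suzuki. One first produces a prime $p$, a conjugate $b$ of $a$, and a nontrivial $p$-subgroup $P\leq\sfit{A}\cap\gen{a,b}$: if $[\sfit{A},a]\neq 1$ take $x\in\o{p}{\sfit{A}}$ with $[a,x]\neq 1$, $b=a^x$ and $P=\gen{[a,x]}$; if $[\sfit{A},a]=1$ then $\sfit{A}\leq\z{A}$, Lemma~\ref{t3a} forces $A=\gen{a}$, and one takes $b=a$, $P=A$. (Note that Lemma~\ref{t3a} is used only for this degenerate case, not to refine the module analysis.) Then for $g\in G$ the group $H=\gen{A,a^g,b^g}$ is generated by $5+2=7$ members of $\calC$, hence solvable; Lemma~\ref{t2} gives $P\leq\o{p}{H}$, while $P^g\leq\gen{a,b}^g\leq H$ is a $p$-subgroup, so $\gen{P,P^g}$ is a $p$-group and Baer--Suzuki yields the contradiction. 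If you want to complete your writeup along the paper's lines, this is the argument to adopt; your $m=3$ route would require a substantially stronger version of Lemma~\ref{t1} that you have not supplied.
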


\begin{proof}
    Proceed as in the proof of the previous lemma and construct the subgroup $A$.

    We claim there is a prime $p$,
    a conjugate $b$ of $a$ and a $p$-subgroup $P$
    with $1 \not= P \leq \sfit{A} \cap \gen{a,b}$.
    If $[\sfit{A},a] \not= 1$ there exists a prime $p$
    and $x \in \o{p}{\sfit{A}}$ with $[a,x] \not= 1$.
    Put $b = a^{x}$ and $P = \gen{[a,x]}$.
    Suppose that $[\sfit{A},a] = 1$.
    As $A = \gen{a^{A}}$ it follows that $\sfit{A} \leq \z{A}$.
    The previous lemma implies that $A$ is abelian.
    Then $A = \gen{a}$.
    Put $b = a$ and $P = A$.

    Let $g \in G$ and set $H = \gen{A, a^{g}, b^{g}}$.
    By hypothesis, $H$ is solvable.
    Lemma~\ref{t2} implies $P \leq \o{p}{H}$.
    As $P^{g} \leq \gen{a,b}^{g} \leq H$ it follows that
    $\gen{P,P^{g}}$ is a $p$-group.
    A contradiction follows from the Baer--Suzuki Theorem.
\end{proof}

\section{Proof of the Corollary}

The proof of Corollary \ref{linear} is standard.  We first prove (1).
We first note the well known fact that if $H$ is a solvable subgroup
of $\GL(n,k)$, then the derived length of $H$ is bounded by a function
$f=f(n)$.

So suppose that the normal closure $N$ of $g$ in $H$ is not solvable.
Then there is some nontrivial element $x$ in the $f$th term in the derived
series of $N$.  We may pass to a subgroup of $G$ and assume that $G$
is finitely generated, and so $G \le GL(n,R)$ where $R$ is a finitely
generated ring over the prime field of $k$.  We can choose a maximal
ideal $M$ of $R$ such that $x$ is not in the congruence kernel of
the map $\phi: \GL(n,R) \rightarrow \GL(n,R/M)$.  Thus,
$\phi(N)$ is not solvable and $\phi(G)$ is finite, whence some four conguates of
$\phi(g)$ generate a nonsolvable subgroup.  Thus, the same is true for $G$.

The proof of (2) is essentially the same. First, as above, reduce to the case
that $G$ is finitely generated and contained in $\GL(n,R)$ where $R$ is a finitely generated
ring over $\mathbb{Z}$.   Now argue exactly as above (except that if the
characteristic is $0$, take $M$ to be a maximal ideal containing some
prime $p > 3$) and so our unipotent element in the image has order divisible
by the characteristic, a prime at least $5$.


\begin{thebibliography}{99}

\bibitem{arpf}  A. Al-Roqi and P. Flavell,
        {\em On the Fitting height of a solvable group that is
        generated by a conjugacy class of 3-elements. }
        Bull. Lond. Math. Soc. \textbf{39} 2007 part 6, p.973--981.


\bibitem{GurAs}
M.~Aschbacher and R.~Guralnick, \emph{Some applications of the
first cohomology
  group}, J. Algebra \textbf{90} (1984), no.~2, 446--460. \MR{MR760022
  (86m:20060)}

\bibitem{Atlas}
J.~H. Conway, R.~T. Curtis, S.~P. Norton, R.~A. Parker, and R.~A.
Wilson,
  \emph{Atlas of finite groups}, Oxford University Press, Eynsham, 1985,
  Maximal subgroups and ordinary characters for simple groups, With
  computational assistance from J. G. Thackray. \MR{MR827219 (88g:20025)}

\bibitem{pf}    P. Flavell,
        {\em On the Fitting height of a solvable group that is generated
        by a conjugacy class. }
        J. London Math. Soc. {\bf 2} 66 (2002) p.101-113.




\bibitem{gordeev}    N. Gordeev, F. Grunewald, B. Kunyavskii, and  E. Plotkin,
        {\em On the number of conjugates defining the solvable radical
        of a finite group}  C. R. Acad. Sci. Paris, Ser. I {\bf 343} (2006).
        
\bibitem {GGP1}    N. Gordeev, F. Grunewald, B. Kunyavskii, and  E. Plotkin,
{\em A Description of Baer-Suzuki type of the solvable radical of a finite
group},  
J. Pure amd Applied Algebra, 2008, to appear.

\bibitem {GGP2}    N. Gordeev, F. Grunewald, B. Kunyavskii, and  E. Plotkin,
{\em 
Baer-Suzuki Theorem for the solvable radical of a finite group}, preprint.
  

\bibitem{GLS}
Daniel Gorenstein, Richard Lyons, and Ronald Solomon, \emph{The
classification
  of the finite simple groups. {N}umber 3.}, Mathematical Surveys and
  Monographs, vol.~40, American Mathematical Society, Providence, RI, 1998.
  \MR{MR1490581 (98j:20011)}

\bibitem{Guest}
Simon Guest, A solvable version of the {B}aer--{S}uzuki theorem,
 {\em Trans. Amer. Math. Soc.}, to appear.


\bibitem{GK}
Robert~M. Guralnick and William~M. Kantor, \emph{Probabilistic
generation of
  finite simple groups}, J. Algebra \textbf{234} (2000), no.~2, 743--792,
  Special issue in honor of Helmut Wielandt. \MR{MR1800754 (2002f:20038)}

\bibitem{GPS} R. Guralnick, E. Plotkin and A, Shalev,
Burnside-type problems related to solvability,
Internat. J. Algebra Comput. 17 (2007),  1033--1048.
\MR{MR2355682}

\bibitem{GS}
Robert~M. Guralnick and Jan Saxl, \emph{Generation of finite
almost simple
  groups by conjugates}, J. Algebra \textbf{268} (2003), no.~2, 519--571.
  \MR{MR2009321 (2005f:20057)}


\bibitem{Moufang}
Martin~W. Liebeck, \emph{The classification of finite simple
{M}oufang loops},
  Math. Proc. Cambridge Philos. Soc. \textbf{102} (1987), no.~1, 33--47.
  \MR{MR886433 (88g:20146)}

\bibitem{LS}
Martin~W. Liebeck and Jan Saxl, \emph{Minimal degrees of primitive
permutation
  groups, with an application to monodromy groups of covers of {R}iemann
  surfaces}, Proc. London Math. Soc. (3) \textbf{63} (1991), no.~2, 266--314.
  \MR{MR1114511 (92f:20003)}

\bibitem{MSW}
G. Malle, J. Saxl, and T. Weigel, {\em Generation of classical
groups}, Geom. Dedicata \textbf{49} (1993), no.~1, 85--116.
\MR{1261575 (95c:20068)}

\bibitem{mw}    O. Manz and T.R. Wolf,
        {\em Representations of Solvable Groups.}
        London Math. Soc. Lecture Note Series, {\bf 185}(1993), CUP.

\end{thebibliography}
\end{document}